\newcommand \ab{\mathrm{ab}}
\newcommand \al{\alpha}
\newcommand \bs{\backslash}
\newcommand \C{{\mathbb C}}
\newcommand \CA{\mathcal{A}}
\newcommand \CB{\mathcal{B}}
\newcommand \CF{\mathcal{F}}
\newcommand \CN{\mathcal{N}}
\newcommand \CO{\mathcal{O}}
\newcommand \Ga{\Gamma}
\newcommand \ga{\gamma}
\newcommand \InjRad{\operatorname{InjRad}}
\newcommand \la{\lambda}
\newcommand \mqed{\tag*\qedhere}
\newcommand \N{{\mathbb N}}
\newcommand \ol{\overline}
\newcommand \om{\omega}
\newcommand \Om{\Omega}
\newcommand \PSL{\operatorname{PSL}}
\newcommand \PSO{\operatorname{PSO}}
\newcommand \Pl{\mathrm{Pl}}
\newcommand \R{{\mathbb R}}
\newcommand \sign{\operatorname{sign}}
\newcommand \SL{\operatorname{SL}}
\newcommand \sm{\smallsetminus}
\newcommand \supp{\operatorname{supp}}
\newcommand \T{{\mathbb T}}
\newcommand \tr{\operatorname{tr}}
\newcommand \vol{\operatorname{vol}}
\newcommand \VN{\operatorname{VN}}
\newcommand \what{\widehat}
\newcommand \Z{{\mathbb Z}}
\renewcommand \1{{\bf 1}}
\renewcommand \({\left(}
\renewcommand \){\right)}
\renewcommand \H{{\mathbb H}}
\newcommand{\e}
[1]{\emph{#1}\index{#1}}
\newcommand{\norm}
[1]{\left\|#1\right\|}
\renewcommand{\sp}
[1]{\left\langle #1\right\rangle}
\newcommand{\tto}
[1]{\stackrel{#1}{\longrightarrow}}
\newtheorem{theorem}{Theorem}[section]
\newtheorem{lemma}[theorem]{Lemma}
\newtheorem{proposition}[theorem]{Proposition}
\theoremstyle{definition}
\newtheorem{definition}[theorem]{Definition}
\newtheorem{example}[theorem]{Example}
\newtheorem{remark}[theorem]{Remark}
\begin{document}

\pagestyle{myheadings} \markright{BENJAMINI-SCHRAMM AND SPECTRAL CONVERGENCE}

\title{Benjamini-Schramm and spectral convergence}
\author{Anton Deitmar}
\date{}
\maketitle

{\bf Abstract:}
It is shown that under mild conditions, Benjamini-Schramm convergence of lattices in locally compact groups is equivalent to spectral convergence.
Next both notions are extended to the relative case and  then are expressed in terms of relative $L^2$-theory.

$$ $$

\tableofcontents

\newpage
\section*{Introduction}

The notion of Benjamini-Schramm convergence (BS-convergence) of graphs \cite{BS} can be extended to measured proper metric spaces.
Roughly, a sequence $(X_n)$ of such spaces converges to $X$, if for any $R>0$ the probability of a ball in $X_n$ of radius $R$ being isomorphic with a ball in $X$ converges to 1 as $n\to\infty$.

In the paper \cite{7Samurais}, BS-convergence is 
used in the context of locally symmetric Riemannian manifolds.
These are Riemannian manifolds $(M,g)$, on which the reflection $s_m$ at a given point $m\in M$, defined in a neighborhood $U$ of that point, is an isometry of the metric, see for instance \cite{Helg}.
Locally symmetric manifolds can be classified as follows:
A locally symmetric manifold $M$ has a universal covering $X$, which is a globally symmetric space, i.e., the local reflections $s_m$, $m\in M$ extend to global isometries on $X$.
The isometry group $G$ of $X$ is a Lie group and the fundamental group $\Ga$ of $M=\Ga\bs X$ is a discrete subgroup of $G$. 
The space $X$ is a product of Riemannian manifolds $X=\R^n\times X_c\times X_{nc}$.
The compact part $X_c$ is a quotient of a compact Lie group by a closed subgroup and its analysis is well understood.
The interesting part is $X_{nc}$ which has no compact or flat factors and which is of the form $G/K$ where $G$ is a semi-simple connected Lie group with finite center and $K$ a maximal compact subgroup of $G$.
Hence the most interesting locally symmetric spaces are those of the form $\Ga\bs G/K$.
It \cite{7Samurais} it is shown, among other things, that the normalised spectral measures of a uniformly discrete sequence $(\Ga_n)$ of lattices in a connected semi-simple Lie group $G$ without center, weakly converges to the Plancherel measure, if the sequence of Riemannian manifolds $\Ga_n\bs G/K$ is BS-convergent to the symmetric space $G/K$, where $K$ is a maximal compact subgroup of $G$.

In the present paper, we define BS-convergence for sequences of discrete subgroups of arbitrary locally compact groups.
We show that in the special case of semi-simple Lie groups the new notion of BS-convergence coincides with the notion in  \cite{7Samurais}.
We then show that in this generalized setting BS-convergence, together with uniform discreteness implies spectral convergence.
We also get a converse assertion, saying that if a sequence of subgroups satisfies spectral convergence, it is BS-convergent.

We further generalize both notions to the relative case, i.e., the case when all groups $\Ga_n$ have a common non-trivial normal subgroup $\Ga_\infty$.
We finally express these notions in terms of $L^2$-theory and in the last section we compute the limit measure in a concrete example.

\section{Plancherel sequences}
Throughout the paper, $G$ will denote a locally compact group.
We once and for all fix a Haar measure $dx$ on $G$.

\begin{definition}
A \e{lattice} in $G$ is a discrete subgroup $\Ga\subset G$ such that the quotient $\Ga\bs G$ carries a non-zero, $G$-invariant Radon measure $\mu$ with
$$
\mu\(\Ga\bs G\)<\infty.
$$
If $G$ admits a lattice, then $G$ is unimodular, see Theorem 9.1.6 of \cite{HA2}.
A discrete subgroup $\Ga$, such that $\Ga\bs G$ is compact, is a lattice, see Proposition 9.1.5 of \cite{HA2}.
In this case, one speaks of a \e{cocompact lattice}.
\end{definition}

In this paper, we will assume that $G$ contains a discrete subgroup $\Ga$ such that the quotient $\Ga\bs G$ is compact.
It then follows that $G$ is unimodular and that $\Ga\bs G$ carries a non-trivial $G$-invariant Radon measure, which ist finite, i.e., the group $\Ga$ is a \e{lattice}.
We then write $\mu_{\Ga\bs G}$ for the unique invariant Radon measure which is the quotient of the Haar measure on $G$ and the counting measure on $\Ga$., i.e., the unique measure which satisfies
$$
\int_Gf(x)\,dx=\int_{\Ga\bs G}\sum_{\ga\in\Ga}f(\ga x)\,d\mu_{\Ga\bs G}(x)
$$
for every $f\in L^1(G)$.
As $\mu_{\Ga\bs G}$ is derived from the Haar measure in this way, it causes no confusion when we write $dx$ instead.

\begin{definition}\label{def1.2}
For any locally compact group $G$, there is a notion of test functions, i.e., a space of functions $C_c^\infty(G)$ given by Bruhat \cite{Bruhat}, see also \cite{Tao} and \cite{TraceGroups}.
For the sake of completeness, we shall repeat it here.
First, if $L$ is a Lie group, then $C_c^\infty(L)$ is defined as the space of all infinitely differentiable functions of compact support on $L$.

Next, suppose the locally compact group $H$ has the property that $H/H^0$ is compact, where $H^0$ is the connected component.
Let $\CN$ be the family of all normal closed subgroups $N\subset H$ such that $H/N$ is a Lie group with finitely many connected components.
We call $H/N$ a \e{Lie quotient} of $H$.
Then, by \cite{MZ}, the set $\CN$ is directed by inverse inclusion and 
$$
H\cong \lim_{\substack{\leftarrow\\ N}}H/N,
$$
where the inverse limit runs over the set $\CN$.
So $H$ is a projective limit of Lie groups.
The space $C_c^\infty(H)$ is then defined to be the sum of all spaces $C_c^\infty(H/N)$ as $N$ varies in $\CN$.

Finally to the general case.
By \cite{MZ} one knows that every locally compact group $G$ has an open subgroup $H$ such that $H/H^0$ is compact, so $H$ is a projective limit of connected Lie groups in a canonical way.
A Lie quotient of $H$ then is called a \e{local Lie quotient} of $G$.
We have the notion $C_c^\infty(H)$ and for any $g\in G$ we define $C_c^\infty(gH)$ to be the set of functions $f$ on the coset $gH$ such that $x\mapsto f(gx)$ lies in $C_c^\infty(H)$.
We then define $C_c^\infty(G)$ to be the sum of all $C_c^\infty(gH)$, where $g$ varies in $G$.
Note that the definition is independent of the choice of $H$, since, given a second open group $H'$, the support of any given $f\in C_c(G)$ will only meet finitely many left cosets $gH''$ of the open subgroup $H''=H\cap H'$.
This concludes the definition of the space $C_c^\infty(G)$ of test functions.
\end{definition}

\begin{remark}\label{rm2.5}
In the sequel, we shall use the \e{trace formula}, which we now briefly explain.
For an introduction to the trace formula see Chapter 9 of \cite{HA2}.
Let $G$ be a locally compact group and let $\Ga$ be a cocompact lattice in $G$.
By $\widehat G$ we denote the \e{unitary dual} of $G$, i.e., the set of isomorphy classes of irreducible unitary representations of $G$.
On the Hilbert space $L^2(\Ga\bs G)$ one has a unitary representation of $G$ given by right translations, so for each $y\in G$ we get
\begin{align*}
R(y):L^2(\Ga\bs G)&\to L^2(\Ga\bs G),\\
R(y)\phi(x)&=\phi(xy).
\end{align*}
Since $\Ga\bs G$ is compact, it turns out, Theorem 9.2.2 of \cite{HA2}, that this representation decomposes into a direct sum of irreducible representations,
$$
(R,L^2(\Ga\bs G))\cong \bigoplus_{\pi\in\what G} N_\Ga(\pi)\pi,
$$
where the sum is a direct Hilbert sum and each class $\pi\in\what G$ occurs with finite multiplicity $N_\Ga(\pi)\in\N_0$ and only countably many $\pi\in\what G$ satisfy $N_\Ga(\pi)\ne 0$.

For $f\in C_c^\infty(G)$, integration defines  an operator $R(f)$ on the Hilbert space $L^2(\Ga\bs G)$ by
$$
R(f)\phi(x)=\int_Gf(y)\phi(xy)\,dy,\qquad \phi\in L^2(\Ga\bs G).
$$
The trace formula says that this operator is actually a trace class operator and that its trace can be computed as
$$
\tr R(f)=\int_{\Ga\bs G}\sum_{\ga\in\Ga}f(x^{-1}\ga x)\,dx=\sum_{[\ga]}\vol(\Ga_\ga\bs G_\ga)\,\CO_\ga(f).
$$
We explain the notation on the right hand side:
The sum runs over all conjugacy classes $[\ga]$ in the group $\Ga$.
For given $\ga\in \Ga$, the groups
$\Ga_\ga$ and $G_\ga$ are the centralisers of $\ga$ in $\Ga$ and $G$ respectively. 
Finally, $\CO_\ga(f)$ denotes the \e{orbital integral}
$$
\CO_\ga(f)=\int_{G_\ga\bs G}f(x^{-1}\ga x)\,dx.
$$
As the invariant measures are only unique up to scaling, to make sense of the formula, one notes that it is part of the assertion of the trace formula that for each choice of Haar measure on $G_\ga$, the volume $\vol(\Ga_\ga\bs  G_\ga)$ is finite, the orbital integral converges and the product $\vol(\Ga_\ga\bs G_\ga)\CO_\ga(f)$ does not depend on the choice of the Haar measure.
(The last assertion is easy to see, as the two factors have reciprocal dependence on scaling of the Haar measure.)

The operator $R(f)$ being trace class on $L^2(\Ga\bs G)$ immediately implies that $\pi(f)$ is trace class for each $\pi\in\what G$ with $N_\Ga(\pi)\ne 0$.
Hence the trace $\tr R(f)$ can also be spectrally expressed in terms of the traces $\tr\pi(f)$, so that the trace formula finally reads
$$
\sum_{\pi\in\what G} N_\Ga(\pi)\tr\pi(f)=\int_{\Ga\bs G}\sum_{\ga\in\Ga}f(x^{-1}\ga x)\,dx=
\sum_{[\ga]}\vol(\Ga_\ga\bs G_\ga)\,\CO_\ga(f).
$$
The middle expression is usually omitted as it easily transforms into the right hand side.
We shall, however, make use of all three expressions in the sequel.
In order to apply the trace formula, one has to find suitable test functions $f$ for which at least one of the two sides is computable via other means.
Finding such test functions requires an additional amount of work and can be a tricky business.
In \cite{HA2} one finds applications of the trace formula to the Harmonic Analysis of the Heisenberg group and the group $\SL_2(\R)$.
\end{remark}

\begin{definition}
We  say that the measure on $\what G$ given by
$$
\what\mu_\Ga=\sum_{\pi\in\what G}N_\Ga(\pi)\delta_\pi
$$
is the \e{spectral measure} attached to $\Ga$.
\end{definition}

\begin{definition}
Let $(\Ga_n)$ be a sequence of cocompact lattices in $G$.
We say that the sequence is a \e{Plancherel sequence}, if for every $f\in C_c^\infty(G)$ we have
$$
\frac1{\vol(\Ga_n\bs G)}\int_{\what G}\hat f(\pi)\,d\what\mu_{\Ga_n}(\pi)\ \longrightarrow f(e)
$$
as $n\to\infty$, where $\hat f(\pi)=\tr\pi(f)$.

If the group $G$ is type I, then $f(e)=\int_{\what G}\hat f(\pi)\,d\what\mu_\Pl(\pi)$, where $\what\mu_\Pl$ is the Plancherel measure on $\what G$, so that in this case the sequence $(\Ga_n)$ is Plancherel if and only if in the dual space of $C_c^\infty(G)$ one has weak-*-convergence
$$
\frac1{\vol(\Ga_n\bs G)}\what\mu_{\Ga_n}\ \longrightarrow\ \what\mu_\Pl.
$$
\end{definition}

\begin{remark}
Assume that $G$ is a finite product $\prod_{i=1}^n G_i$, where each $G_i$ is the group of rational points of a linear reductive group ${\bf G}_i$ over a local field $F_i$ of characteristic zero.
If a sequence $(\Ga_n)$ of  lattices is a Plancherel sequence, then
$$
\frac1{\vol(G/\Ga_n)}\what\mu_{\Ga_n}(U)\ \longrightarrow\ \what\mu_\Pl(U)
$$
for every open set $U\subset\what G$, which is $\what\mu_\Pl$-regular and relatively compact.
This follows from the density principle of Sauvageot \cite{Sauvageot}.
\end{remark}

\begin{example}
 Except for trivial examples like finite groups, for a sequence to be Plancherel, it will be necessary that $\vol(G/\Ga_n)$ tends to infinity.
This is not sufficient, though, as we  see by the example of $G=\R^2$ and
$\Ga_n=n^2\Z\times\frac1n\Z$.
Another example of this can be constructed in the group $G=\SL_2(\R)$, where one chooses $\Ga_n$ in the Teichm\"uller space such that the hyperbolic manifold $\Ga_n\bs\H$ has a very short closed geodesic, but big volume. 
\end{example}

\section{Generalized Benjamini-Schramm convergence}\label{Benjamini-Schramm}

For any subset $S\subset G$ we write $S^\star$ for the set $S\sm\{1\}$.

\begin{definition}
We say that a sequence $(\Ga_n)$ of lattices in $G$ is \e{Benjamini-Schramm convergent} or \e{BS-convergent} to $\{1\}$, if for every compact set $C\subset G$ the sequence
$$
P_{\Ga_n\bs G}\(\big\{x\in \Ga_n\bs G: x^{-1}\Ga_n^\star x\cap C\ne \emptyset\big\}\)
$$
tends to zero,  where $P_{\Ga_n\bs G}$ is the normalised invariant volume on $\Ga_n\bs G$.
\end{definition}

Next we check the compatibility of this definition with the definition of BS-convergence in \cite{7Samurais}.

\begin{definition}
Let $(M,g)$ be a Riemannian manifold and let $x\in M$ a point.
In Differential Geometry, one defines the \e{exponential map at $x$} as a map $\exp_x:T_xM\to M$ from the tangent space $T_xM$ at the point $x$ to the manifold as follows:
Every $v\in T_xM$, which is not zero, defines a unique geodesic tangential to $v$ and starting at $x$. One follows that geodesic for the time $\norm v_g$ to reach a point one calls $\exp_x(v)$.
Finally, one sets $\exp_x(0)=x$.
The \e{injectivity radius} of $M$ at the point $x$, denoted $\InjRad(x)$, is the supremum of all $r>0$ such that $\exp_x$ is injective on the open ball $\big\{ v\in T_xN: \norm v_g<r\big\}$. The injectivity radius  is  $>0$ for every point $x\in M$.
\end{definition}

\begin{definition}
Let $G$ be a connected  semi-simple Lie group with trivial center and let $(\Ga_n)$ be a sequence of lattices in $G$.
Fix a maximal compact subgroup $K$.
Then the space $G/K$ carries a natural $G$-invariant Riemannian metric given by the Killing form on the Lie algebra of $G$, see \cite{Helg}.
In \cite{7Samurais} the sequence of spaces $\Ga_n\bs X$ is said to be \e{Benjamini-Schramm convergent} or \e{BS-convergent} to $X$ if for every $R>0$
$$
P_{\Ga_n\bs X}\(\big\{x\in\Ga_n\bs X: \InjRad(x)\le R\big\}\)
$$
tends to zero, where $\InjRad(x)$ is the injectivity radius at the point $x$.
\end{definition}

\begin{proposition}\label{prop2.4}
Let $G$ be a connected  semi-simple Lie group with trivial center and $(\Ga_n)$ a sequence of lattices in $G$.
Then $(\Ga_n)$ is Benjamini-Schramm convergent to $\{1\}$ if and only if $\Ga_n\bs X$ is Benjamini-Schramm convergent to $X$.
\end{proposition}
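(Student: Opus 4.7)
The plan is to interpret both conditions via the natural projection $\pi:\Ga_n\bs G\twoheadrightarrow \Ga_n\bs X$ obtained by quotienting on the right by the compact group $K$. Since $K$ is compact and both invariant measures are normalised, $\pi_* P_{\Ga_n\bs G}=P_{\Ga_n\bs X}$, so the normalised measure of a $\pi$-saturated subset of $\Ga_n\bs G$ coincides with that of its image in $\Ga_n\bs X$.

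The heart of the argument is to build a dictionary between injectivity-radius thresholds downstairs and compact subsets upstairs. For $R>0$ set
\[ C_R:=\{g\in G: d_X(eK,gK)\le 2R\}, \]
which is a bi-$K$-invariant compact subset of $G$ (it is the preimage of the closed $2R$-ball in $X$ under the proper map $G\to X$ with compact fibre $K$). Using that $X$ is Cartan--Hadamard (one may and will restrict to the non-compact type, as otherwise both BS notions degenerate for large $R$), the injectivity radius at $\Ga_n xK$ is equal to $\tfrac{1}{2}\inf_{\ga\in\Ga_n^\star}d_X(xK,\ga xK)$, and the inequality $d_X(xK,\ga xK)\le 2R$ is in turn equivalent to $x^{-1}\ga x\in C_R$. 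This gives
\[ \pi^{-1}\bigl\{\InjRad\le R\bigr\}=\bigl\{\Ga_n x\in\Ga_n\bs G:\ x^{-1}\Ga_n^\star x\cap C_R\ne\emptyset\bigr\}, \]
and both sides carry the same normalised measure.

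From this identity both implications are formal. For the direction new $\Rightarrow$ old, apply the hypothesis to $C=C_R$ for each fixed $R>0$. Conversely, any compact $C\subset G$ is contained in $C_R$ as soon as $2R\ge\sup_{g\in C}d_X(eK,gK)$, which is finite by continuity of the distance function and compactness of $C$; then the new-side measure is dominated by the old-side measure at radius $R$, which tends to zero by hypothesis. The only point that really requires care is the injectivity-radius formula: it needs the injectivity radius of $X$ itself to be infinite (Cartan--Hadamard) together with the discreteness of $\Ga_n$. Once this identity is granted, the remainder is bookkeeping between radii in $X$ and bi-$K$-invariant compact sets in $G$.
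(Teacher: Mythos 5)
Your proof is correct and follows essentially the same route as the paper: both reduce the claim to the identity that $\InjRad(\Ga_n xK)\le R$ is equivalent to $x^{-1}\Ga_n^\star x$ meeting a fixed compact, bi-$K$-invariant subset of $G$ depending only on $R$, and then use that such sets are cofinal among all compact subsets of $G$. The only cosmetic difference is that you invoke the displacement formula $\InjRad=\tfrac12\inf_{\ga\ne 1}d(xK,\ga xK)$ on the Cartan--Hadamard space $X$ to arrive at $C_R=\{g: d(eK,gK)\le 2R\}$, whereas the paper unwinds the definition of the exponential map directly and lands on $U_RKU_R^{-1}$; these two families of compact sets are mutually cofinal, so the two arguments coincide (and both share the same implicit assumption that elliptic elements of $\Ga_n$ are not an issue).
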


\begin{proof}
Suppose that $(\Ga_n)$ is BS-convergent to $\{1\}$ and let $R>0$.
Let $\ol B_R$ be the closed ball around $eK$ of radius $R$ and let $U_R\subset G$ be its pre-image under $G\to G/K=X$.
Let $x\in G$.
The condition $\InjRad(xK)\le R$ is equivalent to the existence of $u_1,u_2\in U_R$ with $u_1K\ne u_2K$ such that
$$
\Ga_n xu_1 K=\Ga_n xu_2K,
$$
or $xu_1=\ga xu_2k$ for some $\ga\in\Ga_n^\star$, $k\in K$, or
$$
x^{-1}\ga x=u_1k^{-1}u_2^{-1}.
$$
So $\InjRad(xK)\le R$ if and only if
$$
x^{-1}\Ga_n^\star x\cap U_RKU_R^{-1}\ne \emptyset.
$$
Since $U_RKU_R^{-1}$ is compact, it follows that $\Ga_n\bs X$ is BS-convergent to $X$.

For the converse note that for every compact set $C\subset G$ there exists $R>0$ such that $C\subset U_R KU_R^{-1}$.
\end{proof}

\begin{definition}
Let $G$ be a locally compact group.
A sequence $(\Ga_n)$ of subgroups is called \e{uniformly discrete}, if there exists a unit-neighborhood $U$ such that $x^{-1}\Ga_nx\cap U=\{1\}$ holds for every $n$ and every $x\in G$.
\end{definition}

In \cite{7Samurais} it is shown, that a sequence of cocompact lattices $(\Ga_n)$ in a semi-simple Lie group $G$ is Plancherel, if it is uniformly discrete and $\Ga_n\bs G/K$ is Benjamini-Schramm convergent to $G/K$.
We shall generalize this result to arbitrary locally compact groups and we shall also prove the converse statement.

\begin{theorem}
Let $G$ be a locally compact group and let $(\Ga_n)$ be a sequence of cocompact lattices which is uniformly discrete.
Then the following are equivalent:
\begin{enumerate}[\rm (a)]
\item  $(\Ga_n)$ is BS-convergent to $\{1\}$.
\item
$(\Ga_n)$ is a Plancherel sequence.
\end{enumerate}
The implication (b)$\Rightarrow$(a) even holds for any sequence of cocompact lattices, i.e., without the assumption of $(\Ga_n)$ being uniformly discrete.
\end{theorem}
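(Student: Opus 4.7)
The plan is to rewrite the Plancherel condition via the trace formula of Remark~\ref{rm2.5} and compare it directly with the BS condition. Separating the $\ga=1$ contribution on the geometric side of the trace formula gives
$$
\frac{1}{\vol(\Ga_n\bs G)}\int_{\what G}\hat f(\pi)\,d\what\mu_{\Ga_n}(\pi) = f(e)+E_n(f),
$$
where
$$
E_n(f):=\frac{1}{\vol(\Ga_n\bs G)}\int_{\Ga_n\bs G}\sum_{\ga\in\Ga_n^\star}f(x^{-1}\ga x)\,dx.
$$
Thus condition (b) is equivalent to saying $E_n(f)\to 0$ for every $f\in C_c^\infty(G)$, and the task is to compare this with the vanishing of $P_{\Ga_n\bs G}(A_n(C))$ for every compact $C\subset G$, where $A_n(C):=\{x\in\Ga_n\bs G: x^{-1}\Ga_n^\star x\cap C\ne\emptyset\}$.

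For (a)$\Rightarrow$(b), I take $f\in C_c^\infty(G)$ and set $C=\supp f$. Then the inner sum defining $E_n(f)$ is supported on $A_n(C)$. Uniform discreteness supplies a unit neighborhood $U$ with $x^{-1}\Ga_n x\cap U=\{1\}$ for all $n,x$; choosing a symmetric neighborhood $V$ with $VV\subset U$, the translates $\ga V$ for $\ga\in x^{-1}\Ga_n x$ are pairwise disjoint, so the number of elements of $x^{-1}\Ga_n^\star x$ lying in $C$ is bounded by $M:=\vol(CV)/\vol(V)$ uniformly in $n,x$. Hence
$$
|E_n(f)|\le \|f\|_\infty\cdot M\cdot P_{\Ga_n\bs G}(A_n(C))\longrightarrow 0
$$
by (a).

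For the converse (b)$\Rightarrow$(a), fix a compact $C\subset G$ and construct a non-negative $f\in C_c^\infty(G)$ with $f\ge 1$ on $C$. Such an $f$ exists by the Bruhat formalism of Definition~\ref{def1.2}: covering $C$ by finitely many cosets of the open almost-connected subgroup $H$ and pushing down to a suitable Lie quotient, the problem reduces to the standard existence of smooth bump functions on connected Lie groups. For every $x\in A_n(C)$ there is some $\ga\in\Ga_n^\star$ with $f(x^{-1}\ga x)\ge 1$, so the integrand in $E_n(f)$ dominates the characteristic function $\1_{A_n(C)}$. Integrating and normalising yields
$$
P_{\Ga_n\bs G}(A_n(C))\le E_n(f)\longrightarrow 0
$$
by (b). This argument uses only a lower bound, which is why uniform discreteness is unnecessary here.

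The main obstacle is the upper bound in the direction (a)$\Rightarrow$(b): without uniform discreteness the packing estimate $M=\vol(CV)/\vol(V)$ breaks down, since many conjugates can accumulate inside $C$ for a single orbit, and then $E_n(f)$ can stay large even when $P_{\Ga_n\bs G}(A_n(C))$ is small. This mirrors the corresponding difficulty in \cite{7Samurais} and is exactly the phenomenon ruled out by the hypothesis on $(\Ga_n)$.
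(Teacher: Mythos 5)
Your proposal is correct and follows essentially the same route as the paper: isolate the identity contribution in the geometric side of the trace formula, bound the remaining sum by a uniform count of conjugates in $\supp f$ for (a)$\Rightarrow$(b), and dominate $\1_C$ by a test function for (b)$\Rightarrow$(a). The only cosmetic difference is that you obtain the uniform bound on $\#\(x^{-1}\Ga_n^\star x\cap C\)$ by a volume-packing estimate $\vol(CV)/\vol(V)$, whereas the paper's Lemma \ref{lem2.4} covers $C$ by finitely many translates of $V$, each meeting $x^{-1}\Ga_n x$ at most once; both arguments are equally valid.
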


In the case of a semi-simple Lie group the proof of this theorem, in the light of Proposition \ref{thm3.3}, gives a new proof of Theorem 6.7 of \cite{7Samurais} which avoids the use of the Chabauty space.

\begin{proof}
We first show a lemma which is of independent interest.
We shall use the notation  $\# M$ for the cardinality of a set $M$.

\begin{lemma}\label{lem2.4}
Let $(\Ga_n)$ be a sequence of cocompact lattices which is uniformly discrete.
Then for any compact set $C\subset G$ there exists a uniform bound $r\in\N$ on the cardinality as 
$$
\# \(x^{-1}\Ga_n^\star x\cap C\)\ \le\ r
$$
holds for all $n\in\N$ and all $x\in G$.
\end{lemma}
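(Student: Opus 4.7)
My plan is to use uniform discreteness to show that each conjugated lattice $x^{-1}\Ga_n x$ is $V$-separated for a single unit-neighborhood $V$ that depends neither on $x$ nor on $n$, and then cover the compact set $C$ by finitely many translates of $V$.

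By uniform discreteness, fix a unit-neighborhood $U$ with $x^{-1}\Ga_n x\cap U=\{1\}$ for every $n$ and every $x\in G$. By continuity of multiplication and inversion at the identity, choose a symmetric open unit-neighborhood $V$ with $V^{-1}V\subset U$. I claim that for any $g\in G$ the translate $gV$ meets $x^{-1}\Ga_n x$ in at most one point. Indeed, if $a,b\in x^{-1}\Ga_n x\cap gV$, then $a^{-1}b\in V^{-1}V\subset U$, while $a^{-1}b$ also lies in the group $x^{-1}\Ga_n x$; uniform discreteness then forces $a^{-1}b=1$, i.e.\ $a=b$.

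Since $C$ is compact, it is covered by finitely many translates $c_1V,\dots,c_rV$. By the separation property just established, $x^{-1}\Ga_n^\star x\cap C$ contains at most one element from each $c_iV$, so $\#\(x^{-1}\Ga_n^\star x\cap C\)\le r$ uniformly in $x$ and $n$. I do not anticipate any real obstacle; the only subtle point is the routine continuity argument producing the symmetric $V$ with $V^{-1}V\subset U$, and the constant $r$ is simply the covering number of $C$ by $V$-translates.
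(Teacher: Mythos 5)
Your proof is correct and follows essentially the same route as the paper: choose a symmetric unit-neighborhood $V$ with $V^{-1}V\subset U$ (the paper writes $V^2\subset U$, which is the same thing for symmetric $V$), observe that each translate of $V$ meets any conjugate $x^{-1}\Ga_n x$ in at most one point, and take $r$ to be the number of $V$-translates needed to cover $C$. No issues.
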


\begin{proof}
Let $\emptyset\ne C\subset G$ be compact and let $U$ be a relatively compact, open unit-neighborhood such that $x^{-1}\Ga_nx\cap U=\{1\}$ for all $n\in\N$, $x\in G$.
Let $V$ be a symmetric open unit neighborhood with $V^2\subset U$.
Then for all $x,y\in G$, $n\in\N$ we have $\#\(x^{-1}\Ga_nx\cap yV\)\le 1$, since for two
$\sigma_1=yv_1$ and $\sigma_2=yv_2$ in $x^{-1}\Ga_nx\cap yV$ we have $\sigma=\sigma_2^{-1}\sigma=v_2^{-1}v_1\in x^{-1}\Ga_nx\cap V^2\subset x^{-1}\Ga_nx\cap U=\{1\}$, so $\sigma_1=\sigma_2$.

As $C$ is compact, there are $x_1,\dots,x_r\in G$ such that 
$$
C\subset x_1 V\cup\dots\cup x_rV.
$$
Every group of the form $x^{-1}\Ga_nx$ intersects each $x_iV$ in at most one element, hence intersects $C$ in at most $r$ elements.
\end{proof}

Now for the proof of (a)$\Rightarrow$(b) in the theorem:
Let $f\in C_c^\infty(G)$, let 
$C=\supp(f)$ and let $r\in\N$ be as in the lemma.

We have to show that the expression
$$
A_n= \frac1{\vol(G/\Ga_n)}\int_{\what G}\tr\pi(f)\,d\mu_{\Ga_n}-f(e).
$$
tends to zero as $n\to\infty$.
We now use the trace formula, see Remark \ref{rm2.5}.
The trace formula implies that
\begin{align*}
|A_n|&=\left|\int_{\Ga_n\bs G}\sum_{\ga\in\Ga_n^\star}f(x^{-1}\ga x)\,dP_{\Ga_n\bs G}(x)\right|\\
&\le r\norm f_\infty P_{\Ga_n\bs G}\(\big\{x\in \Ga_n\bs G: x^{-1}\Ga_n^\star x\cap C\ne \emptyset\big\}\).
\end{align*}
This tends to zero by assumption.

Finally for (b)$\Rightarrow$(a):
Let $C\subset G$ be a compact set and let $f\in C_c^\infty(G)$ with $f\ge \1_C$.
Then $A_n\to 0$ with
\begin{align*}
A_n&=\int_{\Ga_n\bs G}\sum_{\ga\in\Ga_n^\star}f(x^{-1}\ga x)\,dP_{\Ga_n\bs G}(x)\\
&\ge \int_{\Ga_n\bs G}\#\(x^{-1}\Ga_n^\star x\cap C\)\,dP_{\Ga_n\bs G}(x)\\
&\ge P_{\Ga_n\bs G}\(\big\{x\in \Ga_n\bs G: x^{-1}\Ga_n^\star x\cap C\ne \emptyset\big\}\).\mqed
\end{align*}
\end{proof}

\begin{definition}
For $x\in\R$ set
$$
\sign(x)=\begin{cases} 1&x>0,\\
-1& x<0,\\
0&x=0.
\end{cases}
$$
\end{definition}

\begin{proposition}\label{prop2.9}
\begin{enumerate}[\rm (a)]
\item A given sequence $(\Ga_n)$ of cocompact lattices is Plancherel if and only if for every compact set $C\subset G$ the sequence
$$
\int_{\Ga_n\bs G}\#\(x^{-1}\Ga_n^\star x\cap C\)\,dP_{\Ga_n\bs G}(x)
$$
tends to zero as $n\to\infty$.
\item A given sequence $(\Ga_n)$ of cocompact lattices is BS convergent to $\{1\}$ if and only if for every compact set $C\subset G$ the sequence
$$
\int_{\Ga_n\bs G}\sign\(\#\(x^{-1}\Ga_n^\star x\cap C\)\)\,dP_{\Ga_n\bs G}(x)
$$
tends to zero as $n\to\infty$.
\end{enumerate}
\end{proposition}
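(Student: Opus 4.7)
Part (b) is essentially a tautology. For any set $S$, $\sign(\# S)$ equals $1$ if $S$ is non-empty and $0$ otherwise, since a cardinality is a non-negative integer. Hence the integrand in (b) is the indicator function of the set $\{x\in\Ga_n\bs G:x^{-1}\Ga_n^\star x\cap C\ne\emptyset\}$, so the integral equals $P_{\Ga_n\bs G}$ of this set, which is exactly the quantity in the definition of BS-convergence to $\{1\}$. Part (b) thus follows by inspection, with no further work.

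For part (a), the plan is to recast the Plancherel condition via the trace formula. Separating off the contribution of the identity conjugacy class, the geometric side of the trace formula of Remark \ref{rm2.5} gives
$$
\frac1{\vol(\Ga_n\bs G)}\int_{\what G}\hat f(\pi)\,d\what\mu_{\Ga_n}(\pi)-f(e)\ =\ \int_{\Ga_n\bs G}\sum_{\ga\in\Ga_n^\star}f(x^{-1}\ga x)\,dP_{\Ga_n\bs G}(x).
$$
Call this last quantity $A_n(f)$. Then $(\Ga_n)$ is Plancherel if and only if $A_n(f)\to 0$ for every $f\in C_c^\infty(G)$, and this is the formulation I will match against the integral condition in (a).

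One direction is a direct bound. Given an arbitrary $f\in C_c^\infty(G)$, set $C=\supp f$ and estimate
$$
|A_n(f)|\ \le\ \norm f_\infty\int_{\Ga_n\bs G}\#\(x^{-1}\Ga_n^\star x\cap C\)\,dP_{\Ga_n\bs G}(x),
$$
which tends to zero by the integral assumption, yielding the Plancherel property. For the converse, given a compact set $C\subset G$, I choose a non-negative test function $f\in C_c^\infty(G)$ with $f\ge\1_C$. Then
$$
A_n(f)\ \ge\ \int_{\Ga_n\bs G}\#\(x^{-1}\Ga_n^\star x\cap C\)\,dP_{\Ga_n\bs G}(x)\ \ge\ 0,
$$
and $A_n(f)\to 0$ (by Plancherel) forces the integral to zero.

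The only technical point is the existence of a non-negative $f\in C_c^\infty(G)$ with $f\ge\1_C$. In the Lie group case this is standard. In general, one uses the Bruhat construction of Definition \ref{def1.2}: cover $C$ by finitely many cosets of an open subgroup $H$ with $H/H^0$ compact, pull back a non-negative smooth bump from a sufficiently small Lie quotient $H/N$ on each coset, and sum. This is a purely local analytic construction, independent of the $\Ga_n$, and is the main obstacle only in the sense of being the one step where the generality of $G$ has to be handled with care.
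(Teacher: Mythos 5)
Your proof is correct and follows essentially the same route as the paper: part (b) is the definition read off directly, and part (a) is the trace formula identity $\frac1{\vol(\Ga_n\bs G)}\int_{\what G}\hat f\,d\what\mu_{\Ga_n}-f(e)=\int_{\Ga_n\bs G}\sum_{\ga\in\Ga_n^\star}f(x^{-1}\ga x)\,dP_{\Ga_n\bs G}(x)$, bounded above by $\norm f_\infty$ times the counting integral in one direction and below by the counting integral via a dominating test function $f\ge\1_C$ in the other. The only cosmetic difference is that the paper keeps the identity term (writing $B_n+1$ and normalizing $f(e)=1$) where you subtract it from the start, and your explicit remark on constructing $f\ge\1_C$ via the Bruhat test functions is a point the paper leaves implicit.
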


\begin{proof}
(a)
Let $(\Ga_n)$ be a Plancherel sequence and let $C\subset G$ be a compact set.
We want to show that $B_n=\int_{\Ga_n\bs G}\#\(x^{-1}\Ga_n^\star x\cap C\)\,dP_{\Ga_n\bs G}(x)
$ tends to zero as $n\to\infty$.
By enlarging $C$ we can assume $e\in C$.
Let $f\in C_c^\infty(G)$ with $f\ge \1_C$ and $f(e)=1$.
Then 
$$
\frac1{\vol(\Ga_n\bs G)}\sum_{\pi\in\what G} N_\Ga(\pi)\,\tr\pi(f)
$$
tends to $f(e)=1$ as $n\to\infty$.
By the trace formula we have
\begin{align*}
1\le B_n+1&=\int_{\Ga_n\bs G}\#\(x^{-1}\Ga_n x\cap C\)\,dP_{\Ga_n\bs G}(x)\\
&\le \int_{\Ga_n\bs G}\sum_{\ga\in\Ga_n}f(x^{-1}\ga x)\,dP_{\Ga_n\bs G}(x)\\
&=\frac1{\vol(\Ga_n\bs G)}\sum_{\pi\in\what G} N_\Ga(\pi)\,\tr\pi(f).
\end{align*}
As the latter tends to $1$ for $n\to\infty$, we get $B_n\to 0$ as claimed.

For the converse direction assume that $(\Ga_n)$ is a sequence of cocompact lattices such that for any compact set $C\subset G$ the sequence 
$$
\int_{\Ga_n\bs G}\#\(x^{-1}\Ga_n^\star x\cap C\)\,dP_{\Ga_n\bs G}(x)
$$ 
tends to zero as $n\to\infty$.
Let $f\in C_c^\infty$ and let $C$ be its support. Further let $M>0$ be such that $|f(x)|\le M$ for every $x\in G$.
Then, again by the trace formula,
\begin{align*}
\left|\frac1{\vol(\Ga_n\bs G)}\int_{\what G}\hat f(\pi)\,d\what\mu_{\ga_n}(\pi)-f(e)\right|
&=\left|\frac1{\vol(\Ga_n\bs G)} \sum_{\pi\in\what G} N_{\Ga_n}(\pi)\,\tr\pi(f)-f(e)\right|\\
&= \left|\frac1{\vol(\Ga_n\bs G)}\int_{\Ga_n\bs G}\sum_{\ga\in\Ga_n^\star}f(x^{-1}\ga x)\,dx\right|\\
&\le \frac1{\vol(\Ga_n\bs G)}\int_{\Ga_n\bs G}\sum_{\ga\in\Ga_n^\star}|f(x^{-1}\ga x)|\,dx\\
&\le \frac M{\vol(\Ga_n\bs G)}\int_{\Ga_n\bs G}\#\( x^{-1}\Ga_n^\star x\cap C\) \,dx.
\end{align*}
As this tends to zero we conclude the claim.

Part (b) follows from the definition.
\end{proof}

\begin{example}
To round things up, we  give an example of a sequence $(\Ga_n)$ of cocompact lattices, which is Plancherel, but not uniformly discrete.
The group $G$ will be $\PSL_2(\R)$, which we view as the group of orientation-preserving isometries of the hyperbolic plane $\H$.
For given $n\in\N$ fix a compact hyperbolic surface $\Ga\bs\H$ whose shortest closed geodesic $c_1$ has length $<\frac1n$.
Let $\ga_1\in\Ga$ be an element in the homotopy class of $c_1$, i.e., the deck transformation $\ga_1$ closes the geodesic $c_1$. 
Then there are $\ga_2,\ga_3,\dots,\ga_{2g}\in\Ga$, where $g$ is the genus, such that $\Ga$ is generated by $\ga_1,\dots,\ga_{2g}$ with the only relation $[\ga_1,\ga_2]\cdots[\ga_{2g-1},\ga_{2g}]=1$.
The largest abelian quotient $\Ga^\ab\cong H_1(\Ga\bs\H,\Z)$ is freely generated by the classes $[\ga_1],\dots,[\ga_{2g}]$.
Let $p:\Ga\to\Z[\ga_1]\oplus\dots\oplus\Z[\ga_{2g}]$ be the quotient map.
For $N\in\N$, let 
$$
\Ga(N)=p^{-1}\(\Z[\ga_1]\oplus N\(\Z[\ga_2]\oplus\dots\oplus \Z[\ga_{2g}]\)\).
$$
For given $R>0$ there exists $N_0$ such that for all $N\ge N_0$ the only closed geodesics of length $\le 2R$ in $\Ga(N)\bs \H$ are multiples of $c_1$.
This implies that for a given compact set $C\subset G$ and $N$ large enough the set of all $x\in G$ with $x^{-1}\Ga(N)x\cap C\ne\emptyset$ lies in the pre-image under $x\mapsto \Ga(N)xK\in\Ga(N)\bs\H$ of a tubular neighborhood of fixed size of the geodesic $c_1$.
Increasing $N$, one thus gets 
$$
\int_{\Ga(N)\bs G}\#\(x^{-1}\Ga(N)^\star x\cap C\)\,dP_{\Ga(N)\bs G}(x)<\frac1n.
$$
Setting $\Ga_n$ to be equal to this $\Ga(N)$ gives the desired example.
\end{example}

\section{The relative case and $L^2$-theory}
In this section we consider the following situation:
$(\Ga_n)_{n\in\N}$ is a sequence of cocompact discrete subgroups of a locally compact group $G$, and $\Ga_\infty$ is a common normal subgroup, i.e., $\Ga_\infty\triangleleft \Ga_n$ for every $n$.

\begin{definition}
We say that the sequence $(\Ga_n)$ \e{Plancherel converges} to $\Ga_\infty$, written $\Ga_n\tto{Pl}\Ga_\infty$, if for every compact set $C\subset G$ the sequence
$$
\int_{\Ga_n\bs G}\#\(x^{-1}\(\Ga_n\sm\Ga_\infty\)x\cap C\)\,dP_{\Ga_n\bs G}(x)
$$
tends to zero as $n\to\infty$.
This definition is inspired by Proposition \ref{prop2.9}.
\end{definition}

\begin{definition}
We say that the sequence $(\Ga_n)$ \e{Benjamini-Schramm converges} to $\Ga_\infty$, written $\Ga_n\tto{BS}\Ga_\infty$, if
for every compact set $C\subset G$ the sequence
$$
\int_{\Ga_n\bs G}\sign\(\#\(x^{-1}\(\Ga_n\sm\Ga_\infty\)x\cap C\)\)\,dP_{\Ga_n\bs G}(x)
$$
tends to zero as $n\to\infty$.

In the special case $\Ga_\infty=\{1\}$, these notions coincide with the notion of the previous section.
\end{definition}

\begin{theorem}\label{thm3.3}
Let $(\Ga_n)$ be a sequence of cocompact lattices which is uniformly discrete.
Let $\Ga_\infty$ be a common normal subgroup.
Then the following are equivalent:
\begin{enumerate}[\rm (a)]
\item  $(\Ga_n)$ is BS-convergent to $\Ga_\infty$.
\item
$(\Ga_n)$ is Plancherel convergent to $\Ga_\infty$.
\end{enumerate}
The implication (b)$\Rightarrow$(a) even holds for any sequence of cocompact lattices, i.e., without the assumption of $(\Ga_n)$ being uniformly discrete.
\end{theorem}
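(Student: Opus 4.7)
The plan is to reduce Theorem \ref{thm3.3} directly to Proposition \ref{prop2.9} by a pointwise comparison of integrands, replacing $\Ga_n^\star$ everywhere with $\Ga_n\sm\Ga_\infty$. Concretely, write
$$
k_n(x)=\#\(x^{-1}\(\Ga_n\sm\Ga_\infty\)x\cap C\),
$$
so that the defining integrals for BS- and Plancherel-convergence to $\Ga_\infty$ are $\int_{\Ga_n\bs G}\sign(k_n(x))\,dP_{\Ga_n\bs G}(x)$ and $\int_{\Ga_n\bs G}k_n(x)\,dP_{\Ga_n\bs G}(x)$, respectively. The proof then consists of two elementary pointwise inequalities on $k_n$ that sandwich one integral between constants times the other.

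For (b)$\Rightarrow$(a), which is claimed without uniform discreteness, I would invoke the trivial inequality $\sign(k)\le k$ valid for every $k\in\N_0$. Applying it to $k_n(x)$ and integrating against $P_{\Ga_n\bs G}$ shows that the BS-integral is bounded above by the Plancherel integral; the conclusion is immediate. For (a)$\Rightarrow$(b), I would apply Lemma \ref{lem2.4} to the compact set $C$ to obtain a uniform bound $r=r(C)\in\N$ with $\#(x^{-1}\Ga_n^\star x\cap C)\le r$ for all $n$ and all $x\in G$. Since $\Ga_\infty$ is a subgroup it contains $1$, and therefore $\Ga_n\sm\Ga_\infty\subseteq\Ga_n^\star$, whence $k_n(x)\le r$ uniformly. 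This gives the reverse pointwise bound $k_n(x)\le r\cdot\sign(k_n(x))$, and integrating shows that the Plancherel integral is at most $r$ times the BS-integral, which tends to $0$ by (a).

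The main observation enabling this compact argument, and the only thing one needs to check, is that Lemma \ref{lem2.4} was formulated for the larger set $\Ga_n^\star$, so its uniform bound applies a fortiori to $\Ga_n\sm\Ga_\infty$; no new counting lemma is required for the relative case. Consequently there is no real obstacle: the theorem is structurally identical to the $\Ga_\infty=\{1\}$ case handled in Proposition \ref{prop2.9}, and both implications go through verbatim after this trivial inclusion is noted.
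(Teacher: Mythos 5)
Your proposal is correct and is essentially identical to the paper's own argument: the direction (b)$\Rightarrow$(a) is the trivial inequality $\sign(k)\le k$, and (a)$\Rightarrow$(b) uses the uniform bound from Lemma \ref{lem2.4} together with the inclusion $\Ga_n\sm\Ga_\infty\subseteq\Ga_n^\star$ to get $k_n\le r\cdot\sign(k_n)$. Nothing further is needed.
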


\begin{proof}
(a)$\Rightarrow$(b): Given a compact $C\subset G$, by Lemma \ref{lem2.4} there exists $r\in\N$ such that
$$
\# \(x^{-1}\Ga_n x\cap C\)\ \le\ r
$$
holds for all $x\in G$ and all $n\in\N$.
Therefore
\begin{align*}
&\int_{\Ga_n\bs G}\#\(x^{-1}\(\Ga_n\sm\Ga_\infty\)x\cap C\)\,dP_{\ga_n\bs G}(x)\\
&\le r \int_{\Ga_n\bs G}\sign\(\#\(x^{-1}\(\Ga_n\sm\Ga_\infty\)x\cap C\)\)\,dP_{\ga_n\bs G}(x),
\end{align*}
which implies the claim. The converse direction is trivial.
\end{proof}

\begin{definition}
For a discrete subgroup $\Ga\subset G$ a \e{fundamental domain} is an open set $\CF\subset G$ such that 
\begin{itemize}
\item $\CF\cap \ga\CF=\emptyset$ holds for every $\ga\in\Ga^\star$,
\item $\Ga\ol\CF=G$, where $\ol\CF$ is the topological closure of $\CF$ and
\item Let $\partial \CF=\ol\CF\sm\CF$ denote the boundary of $\CF$.
Then the boundary is a \e{local null-set}, i.e., 
 for every compact set $C\subset G$ the set $C\cap\Ga\partial\CF$ has Haar measure zero.
\end{itemize}
\end{definition}

\begin{proposition}
\begin{enumerate}[\rm (a)]
\item For every locally compact group $G$ and every discrete subgroup $\Ga$ there exists a fundamental domain.
\item If $\CF$ is a fundamental domain for $\Ga\bs G$ then for every $1\le p<\infty$ the natural map $L^p(\Ga\bs G)\to L^p(\CF)$ is an isomorphism of Banach spaces.
\end{enumerate}
\end{proposition}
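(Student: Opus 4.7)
My plan is to build $\CF$ via Zorn's lemma. Using discreteness, I would first pick a symmetric open unit-neighborhood $V$ with $V^2\cap\Ga=\{1\}$, so that $V$ lies in the family $\CD$ of open sets $D\subset G$ satisfying $\ga D\cap D=\emptyset$ for all $\ga\in\Ga^\star$. This family is closed under unions of chains, so Zorn produces a maximal element $\CF$. Maximality should force $\Ga\ol\CF=G$: otherwise $x_0\notin\Ga\ol\CF$ admits, by properness of the $\Ga$-action on $G$, a small open neighborhood $W$ with $W\cap\Ga\ol\CF=\emptyset$ and $\ga W\cap W=\emptyset$ for $\ga\ne 1$, and then $\CF\cup W\in\CD$ strictly enlarges $\CF$.

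The genuinely delicate point is the local null-set condition on $\partial\CF$, which a bare application of Zorn does not deliver. My plan is to restrict the Zorn argument to open sets built from $\Ga$-translates of a fixed $V$ whose boundary is Haar-null; such $V$ exist because, via the local Lie quotient structure from Definition \ref{def1.2}, $G$ admits a neighborhood basis at the identity by preimages of small geodesic balls in Lie quotients, and boundaries of such balls are null. The resulting $\CF$ then satisfies $\partial\CF\subset\Ga\partial V$, and properness of the $\Ga$-action forces each compact $C$ to meet only finitely many $\Ga$-translates of the compact set $\partial V$, so $C\cap\Ga\partial\CF$ is a null-set as required.

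\textbf{Part (b): $L^p$-isomorphism.} Here the defining identity of $\mu_{\Ga\bs G}$,
$$
\int_G g(x)\,dx\;=\;\int_{\Ga\bs G}\sum_{\ga\in\Ga}g(\ga x)\,d\mu_{\Ga\bs G}(x),
$$
does all the work. I would identify $f\in L^p(\Ga\bs G)$ with its $\Ga$-invariant lift to $G$ and apply the identity with $g=|f|^p\1_\CF$ to obtain
$$
\int_\CF|f|^p\,dx\;=\;\int_{\Ga\bs G}|f(x)|^p\Bigl(\sum_{\ga\in\Ga}\1_\CF(\ga x)\Bigr)\,d\mu_{\Ga\bs G}(x)\;=\;\int_{\Ga\bs G}|f|^p\,d\mu_{\Ga\bs G},
$$
where the inner sum equals $1$ for almost every $x$: disjointness $\ga\CF\cap\CF=\emptyset$ bounds it by $1$, and $\Ga\ol\CF=G$ combined with the local null-set property of $\partial\CF$ makes it equal to $1$ off a null set. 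Hence the restriction map $\Phi\colon f\mapsto f|_\CF$ is an isometry $L^p(\Ga\bs G)\to L^p(\CF)$. For surjectivity, given $h\in L^p(\CF)$, I extend by zero to $\tilde h$ on $G$ and set $f(x)=\sum_{\ga\in\Ga}\tilde h(\ga^{-1}x)$; disjointness makes at most one term nonzero at each $x$, so $f$ is well-defined, measurable, $\Ga$-invariant, restricts to $h$ on $\CF$, and the same norm computation places $f$ in $L^p(\Ga\bs G)$ with $\|f\|=\|h\|$.
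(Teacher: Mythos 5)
Your part (b) is essentially the paper's computation: the quotient integral formula applied to $|f|^p\1_\CF$, together with the fact that $\sum_{\ga\in\Ga}\1_\CF(\ga x)$ is $\le 1$ everywhere (disjointness) and $=1$ off the locally null set $\Ga\partial\CF$ (covering). The paper inserts a partition-of-unity function $\psi$ with $\psi^\Ga=1$ only to justify applying that formula to a merely measurable integrand whose support need not be $\sigma$-compact; for second countable $G$ your shortcut is harmless, but in the stated generality this is the point the paper's extra machinery is there to handle.

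Part (a) has a genuine gap. Maximality of $\CF$ in your family $\CD$ yields only that $\Ga\CF$ is \emph{dense} in $G$, not that $\Ga\ol\CF=G$: when $\ol\CF$ is non-compact, $\Ga\ol\CF$ need not be closed, so the neighborhood $W$ with $W\cap\Ga\ol\CF=\emptyset$ that your argument requires can fail to exist even though $\Ga\ol\CF\ne G$. Concretely, take $G=\R$, $\Ga=\Z$, let $C\subset\R/\Z$ be a fat Cantor set with complementary arcs $(a_n,b_n)$, $n\ge2$, and let $\CF=\bigcup_n I_n$ where $I_n\subset(n,n+1)$ is the interval with $p(I_n)=(a_n,b_n)$ under $p:\R\to\R/\Z$. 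This $\CF$ is maximal in $\CD$: any open $U\in\CD$ with $U\supsetneq\CF$ would contain a short interval $J$ around a point of $U\sm\CF$, and injectivity of $p$ on $\CF\cup J$ forces $J$ to meet $(n,n+1)$ for infinitely many $n$, which is impossible. Yet $\Z+\ol\CF=p^{-1}\(\bigcup_n[a_n,b_n]\)$ misses the positive-measure set $p^{-1}(C\sm\{a_n,b_n\})$, so both the covering condition and the conclusion of (b) fail for this maximal element. Your proposed repair, restricting to unions of translates of a fixed $V$ with null boundary, does not close the gap: the boundary of such a union is governed by the (possibly uncountably many) group elements $x$ used as translates, not by $\Ga$-translates of $\partial V$, so $\partial\CF\subset\Ga\partial V$ is unjustified, and maximality in the smaller family still does not force covering. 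The paper proceeds instead by explicit construction: for a Lie group it takes the Dirichlet domain $\{x: d(x,e)<d(x,\ga)\ \forall\ga\in\Ga^\star\}$ of a left-invariant Riemannian metric, whose closure covers by a nearest-translate argument and whose boundary lies in countably many bisectors (null sets), and then reduces general $G$ to this case via Lie quotients of an open almost-connected subgroup and a coset decomposition. You would need either such an explicit construction or a selection argument that genuinely controls both the closure and the boundary.
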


\begin{proof}
(a)
We use the classification results on locally compact groups as cited in Definition \ref{def1.2}.
First we consider the case of a Lie group $G$.
Then any discrete subgroup $\Ga\subset G$ is countable.
By Choosing an inner product on the tangent space $T_eG$ we get a left-invariant Riemann metric on the smooth manifold $G$.
Let $d$ denote the corresponding distance function.
For any $x\in G$ and any $R>0$ the closed ball $B_R(x)$ of radius $R$ around $x$ then is compact and hence $B_P(x)\cap \Ga$ is finite.
Let 
$$
\CF=\big\{ x\in G: d(x,e)<d(x,\ga)\ \forall_{\ga\in\Ga^\star}\big\}.
$$
A simple application of the triangle inequality yields
$$
\CF=\left\{ x\in G: d(x,e)<d(x,\ga)\ \forall_{\substack{\ga\in\Ga^\star\\ d(\ga,e)<3d(x,e)}}\right\}.
$$
So locally, only finitely many inequalities matter, which implies that $\CF$ is open.
We claim that $\CF$ is a fundamental domain.
For each $\ga\in\Ga^\star$ the set
$$
S_\ga=\big\{ x\in G: d(x,e)=d(x,\ga)\big\}
$$
is the zero set of the function $d(x,e)-d(x,\ga)$, which is smooth in a neighborhood of each of its zeros. Therefore, the set $S_\ga$ is contained in a countable union of sub-manifolds and hence of Haar measure zero.
The closure of $\CF$ equals
$$
\ol\CF=\big\{ x\in G: d(x,e)\le d(x,\ga)\ \forall_{\ga\in\Ga^\star}\big\}
$$
and so $\partial\CF$ is contained in the countable union of all $S_\ga$ and therefore is of measure zero.
As $\Ga$ is countable, $\Ga\partial\CF$ is a null-set as well.
The properties $\CF\cap\ga\CF=\emptyset$ for $\ga\ne e$ and $\ol\Ga= G$ are easily verified.
So $\CF$ is a fundamental domain.

Next we assume that $G/G^0$ is compact, where $G^0$ is the connected component of $G$.
Then $G$  is the projective limit
$$
G=\lim_{\substack{\leftarrow\\ \CN}}G/\CN
$$
of all its Lie quotients $G/\CN$.
Note that in this projective system all structure maps are continuous, surjective and open.
An open set $U\subset G$ is called \e{pure}, if $U=p_\CN^{-1}(V)$ for some open set $V\subset G/\CN$, where $G/\CN$ is a Lie quotient of $G$ and $p_\CN$ is the projection $G\to G/\CN$.
As $G$ carries the projective topology, any open set in $G$ is a union of pure open sets.
Let $\Ga\subset G$ be a discrete subgroup.
Then there exists a pure open set $\CF=p_{\CN_0}^{-1}(V)$ such that $\Ga\cap \CF=\{ e\}$.
The group $\Ga_{\CN_0}=p_{\CN_0}(\Ga)$ is a discrete subgroup of $G/\CN_0$.
We can  choose $V$ to be a fundamental domain with respect to $\Ga_{\CN_0}$ and we claim that then $\CF$ is one for $\Ga$.
It is open and since $\Ga\cap \CF=\{ e\}$ the map $\Ga\to\Ga_{\CN_0}$ is injective.
So let $\ga\in\Ga^\star$, then $p_{\CN_0}(\ga)\in\Ga_{\CN_0}^\star$ and therefore $V\cap p_{\CN_0}(\ga)V=\emptyset$. Taking pre-images under $p_{\CN_0}$ we arrive at $\CF\cap \ga\CF=\emptyset$.

In order to show $\Ga\ol\CF=G$ we let $x\in G$.
Replacing $x$ by $\ga x$ for some $\ga\in\Ga$ we may assume that $p_{\CN_0}(x)\in\ol V$, or $x\in p_{\CN_0}^{-1}(\ol V)$.
The latter set is closed and contains $\CF$, therefore $p_{\CN_0}^{-1}(\ol V)\supset\ol\CF$.
As $\CF$ is stable under $\CN_0$-translations, so is $\ol\CF$. Therefore
$$
\ol\CF=p_{\CN_0}^{-1}(p_{\CN_0}(\ol\CF))
$$
and
$p_{\CN_0}(\ol\CF)$ is a closed subset of $G/\CN_0$ which contains $V$, hence $\ol V$ and we conclude
$\ol\CF=p_{\CN_0}^{-1}(\ol V)$, which means we may assume $x\in\ol\CF$ after a $\Ga$-translation, so indeed $\Ga\ol\CF=G$.
By the same token we get $\partial\CF=p_{\CN_0}^{-1}(\partial V)$. In the projective limit topology, any set of the form $p_{\CN_0}^{-1}(C)$ is compact, when $C\subset G/\CN_0$ is compact.
Hence the Haar measure on $G$ induces a Haar measure on $G/\CN_0$ and therefore $\partial\CF$ is a set of Haar measure zero.
This finishes the case of $G/G^0$ being compact.

Finally let $G$ be arbitrary.
Then there exists an open subgroup $H$ such that $H/H^0$ is compact.
Then $G$ is a disjoint  union of open $H$-cosets, $G=\bigsqcup_{j\in J}g_jH$. 
Let $\Ga_H=\Ga\cap H$, then, likewise, $\Ga=\bigsqcup_{i\in I}\ga_i\Ga_H$.
We can assume $I\subset J$ and $\ga_i=g_i$ for every $i\in I$.
Let $\CF_H$ be a fundamental domain in $H$ for the discrete subgroup $\Ga_H$.
We get
$$
\Ga \ol \CF_H=\bigsqcup_{i\in I}\ga_i H=\Ga H.
$$
Let $g\in G$ be such that $\Ga g H\ne \Ga H$.
Then $\Ga g H=\Ga (gHg^{-1})g=\Ga \ol\CF_{gHg^{-1}} g$, where $\CF_{gHg^{-1}}\subset gHg^{-1}$ is a fundamental domain for $\Ga_{gHg^{-1}}(gHg^{-1})$.
Let $(y_\al)_{\al\in A}$ be a set of representatives of $\Ga\bs (G\sm \Ga H)/H$, i.e, 
$$
G\sm \Ga H=\bigsqcup_{\al\in A}\Ga y_\al H.
$$
Accordingly we set
$$
\CF=\CF_H\sqcup\bigsqcup_{\al\in A}(\CF_{y_\al H y_\al^{-1}})y_\al.
$$
Then $\CF$ is open, we have $\Ga\ol\CF=G$ and $\CF\cap\ga\CF=\emptyset$ for $\ga\in\Ga^\star$.
Finally, 
$$
\Ga\partial\CF=\Ga\partial\CF_H\sqcup\bigsqcup_{\al\in A}\(\Ga\partial\CF_{y_\al H y_\al^{-1}}\)y_\al.
$$
The $\Ga$-orbits of boundaries on the right hand side of this formula are local null-sets and as any given compact set $C\subset G$ meets only finitely many $H$ cosets,
the set $C\cap \Ga\partial\CF$ is a null-set.
The proof of part (a) is finished.

(b) Let $f:\Ga\bs G\to [0,\infty)$ be an integrable function.
We can view $f$ also as a function on $G$, which is $\Ga$-invariant.
All we need to show is
$$
\int_{\Ga\bs G}f(x)\,dx=\int_\CF f(x)\,dx.
$$
Let $p(\CF)$ denote the image of $\CF$ in $\Ga\bs G$.
Then clearly
$$
\int_{p(\CF)}f(x)\,dx=\int_\CF f(x)\,dx.
$$
Let $\phi\ge 0$ be a continuous function on $G$ such that for every compact set $C\subset G$ the set of all $\ga\in\Ga$ such that $\phi(\ga C)\ne \{0\}$ is finite and such that the continuous $\Ga$-invariant function
$$
\phi^\Ga(x)=\sum_{\ga\in \Ga}\phi(\ga x)
$$
has no zeros in $G$.
One way to construct such a function is to start with function only satisfying the first requirement and then apply Zorns lemma on the support of $\phi^\Ga$.
Then $1/\phi^\Ga$ is a continuous function and we set
$$
\psi(x)=\frac{\phi(x)}{\phi^\Ga(x)}.
$$
Then $\psi^\Ga=1$.
Hence the function $f\psi$ is integrable on $G$. By Corollary 1.3.6 (d) of \cite{HA2} the function $f\psi$ is identically zero outside a $\sigma$-compact open subgroup $H$ of $G$.
In  particular, the set
$$
\supp(f\psi)\cap\Ga\partial\CF
$$
is a null-set.
Let $G^\circ=G\sm\Ga\partial\CF$, then
$$
f(x)=\sum_{\ga\in\Ga}(f\psi)(\ga x)
$$
and hence
\begin{align*}
\int_{\Ga\bs G}f(x)\,dx
&=\int_{\Ga\bs G}\sum_{\ga\in\Ga}(f\psi)(\ga x)\,dx
=\int_Gf(x)\psi(x)\,dx\\
&=\int_{G^\circ}f(x)\psi(x)\,dx
=\int_{p(\CF)}\sum_{\ga\in\Ga}(f\psi)(\ga x)\,dx\\
&=\int_{p(\CF)}f(x)\,dx
=\int_{\CF}f(x)\,dx
\end{align*}
as claimed.
\end{proof}

\begin{example}
Consider the group $G=\PSL_2(\R)$ and let $\Ga\subset G$ be a cocompact, torsion free, discrete subgroup.
Let $g\ge 2$ be the genus of the Riemann surface $\Ga\bs\H$, where $\H$ is the upper half plane, which can be identified with $G/K$, where $K=\PSO(2)$.
Then the greatest abelian quotient $\Ga^\ab$ of $\Ga$ is isomorphic with $H_1(\Ga)\cong\Z^{2g}$. 
Let $\chi:\Ga\twoheadrightarrow\Z^{2g}\twoheadrightarrow A$ be a surjective homomorphism to some infinite, torsion-free, abelian group $A$.
For any $n\in\N$ let
$$
\Ga_n=\chi^{-1}(nA).
$$
Then the sequence $(\Ga_n)$ is uniformly discrete and converges, in the sense of Theorem \ref{thm3.3}, to
$$
\Ga_\infty=\ker(\chi).
$$  
\end{example}

For each $n$, fix a fundamental domain $\CF_n\subset G$ for $\Ga_n\bs G$.
Then there is a canonical isomorphism $L^2(\Ga_n\bs G)\cong L^2(\CF_n)$.
Fix a set $V_n\subset \Ga_n$ of representatives for $\Ga_n/\Ga_\infty$, then $\CF_\infty=\bigcup_{v\in V_n}v\CF_n$ is a fundamental set for $\Ga_\infty$ and so we get an isomorphism
$$
L^2(\Ga_\infty\bs G)\cong L^2(\Ga_n\bs G)\ \what\otimes\ \ell^2(\Ga_n/\Ga_\infty).
$$
As $\Ga_\infty$ is normal in $\Ga_n$, the left translation yields a representation of $\Ga_n/\Ga_\infty$ on $L^2(\Ga_\infty\bs G)$ by
$$
L_\ga\phi(\Ga_\infty x)=\phi(\Ga_\infty \ga^{-1} x),\qquad \phi\in L^2(\Ga_\infty\bs G).
$$
Let $\CA_n\subset\CB\(L^2(\Ga_\infty\bs G)\)$ be the von Neumann algebra defined as the commutant of this action, i.e., $\CA_n=L(\Ga_n/\Ga_\infty)^\circ$.

\begin{lemma}
In the above isomorphism $\Psi:L^2(\Ga_\infty\bs G)\to L^2(\Ga_n\bs G)\ \what\otimes\ \ell^2(\Ga_n/\Ga_\infty)$ the representation $L$ transforms to the left translation action on the factor $\ell^2(\Ga_n/\Ga_\infty)$.
\end{lemma}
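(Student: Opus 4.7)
The plan is to pin down the isomorphism $\Psi$ on individual vectors and then compute the action of $L_\ga$ in those coordinates. Viewing $\phi\in L^2(\Ga_\infty\bs G)$ as a $\Ga_\infty$-invariant function on $G$, the decomposition $\CF_\infty=\bigsqcup_{v\in V_n}v\CF_n$ identifies $L^2(\CF_\infty)=\bigoplus_{v\in V_n} L^2(v\CF_n)$, and left translation by $v^{-1}$ identifies each summand with $L^2(\CF_n)\cong L^2(\Ga_n\bs G)$. Setting $\phi_v(x):=\phi(vx)$ for $x\in\CF_n$, the isomorphism $\Psi$ reads
$$
\Psi(\phi) = \sum_{v\in V_n}\phi_v\otimes\delta_v.
$$

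To compute $\Psi(L_\ga\phi)$ for $\ga\in\Ga_n$, I would start from the identity $(L_\ga\phi)_v(x) = \phi(\ga^{-1}vx)$ for $x\in\CF_n$, and then locate $\ga^{-1}v$ relative to $V_n$: let $v'\in V_n$ be the representative of the coset $\ga^{-1}v\Ga_\infty$, so $\ga^{-1}v=v'\sigma$ for some $\sigma\in\Ga_\infty$. Here comes the one nontrivial step: normality of $\Ga_\infty$ in $\Ga_n$ gives $v'\sigma=\sigma'' v'$ with $\sigma''=v'\sigma v'^{-1}\in\Ga_\infty$, whence by $\Ga_\infty$-invariance of $\phi$,
$$
\phi(\ga^{-1}vx) = \phi(\sigma'' v' x) = \phi(v'x) = \phi_{v'}(x).
$$
Thus $(L_\ga\phi)_v = \phi_{v'}$, where $v'\in V_n$ represents the class of $\ga^{-1}v$ in $\Ga_n/\Ga_\infty$.

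Reindexing the sum via $v = \ga v'$ then gives
$$
\Psi(L_\ga\phi) = \sum_{v\in V_n}\phi_{\ga^{-1}v}\otimes\delta_v = \sum_{v'\in V_n}\phi_{v'}\otimes\delta_{\ga v'} = (\Id\otimes\la(\ga))\,\Psi(\phi),
$$
where $\la(\ga)\delta_{v'}=\delta_{\ga v'}$ is the left regular representation of $\Ga_n/\Ga_\infty$ on $\ell^2(\Ga_n/\Ga_\infty)$. This is the assertion of the lemma. The only substantive point in the argument is the use of normality to push $\sigma$ past $v'$; without it the components $\phi_v$ would not simply permute under the $\Ga_n$-action, which explains why the hypothesis $\Ga_\infty\triangleleft\Ga_n$ is essential. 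Everything else is bookkeeping on the partition of $\CF_\infty$ into the pieces $v\CF_n$, and I do not anticipate any further obstacle.
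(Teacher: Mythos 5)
Your proof is correct and follows essentially the same route as the paper's: make $\Psi$ explicit via the decomposition $\CF_\infty=\bigsqcup_{v\in V_n}v\CF_n$, use normality of $\Ga_\infty$ together with left $\Ga_\infty$-invariance of $\phi$ to see that the components depend only on the coset $v\Ga_\infty$, and then reindex to exhibit the left regular representation on $\ell^2(\Ga_n/\Ga_\infty)$. The paper merely writes the components as $\phi_v(x)=\phi(v\sigma_{n,x}x)$ with $\sigma_{n,x}\in\Ga_n$ the element moving $x$ into $\CF_n$, which is the same identification you use.
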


\begin{proof}
This is standard, but we include a proof for the convenience of the reader.
We need to make explicit the isomorphism above. For this note that for every $x\in G$ there exists a unique $\sigma_{n,x}\in\Ga_n$ such that $\sigma_{n,x}x\in\CF_n$.
Note that the map $G\to\CF_n$, $x\mapsto\sigma_{n,x}x$ is $\Ga_n$-invariant.
We have
$$
\Psi(\phi)=\sum_{v\in V_n}\phi_v\otimes\delta_v,
$$
where $\phi_v(x)=\phi(v\sigma_{n,x}x)$ for $x\in G$ and $\delta_v(\ga\Ga_\infty)=1$ if $\ga\Ga_\infty=v\Ga_\infty$ and zero otherwise. As $\phi$ is left invariant under $\Ga_\infty$, the function $\phi_v$ does not depend on the choice or $v$, i.e., the choice of $V_n$.
Hence we can write $\Psi(\phi)=\sum_{\ga\in\Ga_n/\Ga_\infty}\phi_\ga\otimes\delta_\ga$.
For $\ga_0\in\Ga_n/\Ga_\infty$ we have
\begin{align*}
\Psi(L_{\ga_0}\phi)&=\sum_{\ga\in\Ga_n/\Ga_\infty}(L_{\ga_0}\phi)_\ga\otimes\delta_\ga,
\end{align*}
and $(L_{\ga_0}\phi)_\ga(x)=L_{\ga_0}\phi(\ga\sigma_{n,x}x)=\phi(\ga_0^{-1}\ga\sigma_{n,x}x)=\phi_{\ga_0^{-1}\ga}(x)$, so that
\begin{align*}
\Psi(L_{\ga_0}\phi)&=\sum_{\ga\in\Ga_n/\Ga_\infty}(\phi)_\ga\otimes\delta_{\ga_0\ga}\\
&=\sum_{\ga\in\Ga_n/\Ga_\infty}(\phi)_\ga\otimes L_{\ga_0}\delta_{\ga}.\mqed
\end{align*}
\end{proof}

The lemma implies that
$$
\CA_n\cong \CB(L^2(\Ga_n\bs G))\otimes \VN(\Ga_n/\Ga_\infty),
$$
where $\VN(\Ga_n/\Ga_\infty)$ is the group  von Neumann algebra of $\Ga_n/\Ga_\infty$, which is defined as the commutant of the left translation on 
$\ell^2(\Ga_n/\Ga_\infty)$ and is generated by  right translations
$$
R_\ga f(x)=f(x\ga),\qquad f\in\ell^2(\Ga_n/\Ga_\infty).
$$
Let $\tau_n:\VN(\Ga_n/\Ga_\infty)\to\C$,
$$
\tau_n\(\sum_{\ga\in\Ga_n/\Ga_\infty}c_\ga R_\ga\)=c_e,
$$
where $e$ is the neutral element of the group $\Ga_n/\Ga_\infty$.
Then $\tau_n$ is a finite trace on $\VN(\Ga_n/\Ga_\infty)$. Let $\tr_{L^2(\Ga_n\bs G)}$ be the standard trace on $\CB(L^2(\Ga_n\bs G))$. Then
$$
\tr_{\Ga_n,\Ga_\infty}^{(2)}=\frac1{\vol(\Ga_n\bs G)}\tr_{L^2(\Ga_n\bs G)}\otimes \tau_n
$$ 
is the normalised \e{$L^2$-trace} on $\CA_n$.

For $f\in C_c^\infty(G)$ we write $R_{\Ga_n}(f)$ for the induced operator on $L^2(\Ga_n\bs G)$ given by
$$
R_{\Ga_n}(f)\phi(x)=\int_Gf(y)\phi(xy)\,dy.
$$

\begin{lemma}\label{lem4.2}
For $f\in C_c^\infty(G)$ the operator $R_{\Ga_n}(f)$ lies in $\CA_n$. It has a well-defined $L^2$-trace which equals
$$
\tr_{\Ga_n,\Ga_\infty}^{(2)}(R_{\Ga_\infty}(f))=\frac1{\vol(\Ga_n\bs G)}\sum_{[\ga]_{\Ga_n}\subset \Ga_\infty}
\vol\(\Ga_{n,\ga}\bs G_\ga\)\,\CO_\ga(f),
$$
where the sum runs over the $\Ga_n$ conjugacy classes $[\ga]_{\Ga_n}$ of elements $\ga\in\Ga_\infty$.

If there exists a lattice $\Ga$ such that every $\Ga_n$ is a normal subgroup of $\Ga$, then the expression $\tr_{\Ga_n,\Ga_\infty}^{(2)}(R_{\Ga_\infty}(f))$ does not depend on $n$.
In this case it equals the $L^2$-trace $\tr_{\Ga,\Ga_\infty}^{(2)}(R_{\Ga_\infty}(f))$ with respect to $\Ga$, 
$$
\tr_{\Ga,\Ga_\infty}^{(2)}(R_{\Ga_\infty}(f))=\frac1{\vol(\Ga\bs G)}\sum_{[\ga]_{\Ga}\subset \Ga_\infty}
\vol\(\Ga_{\ga}\bs G_\ga\)\,\CO_\ga(f).
$$
\end{lemma}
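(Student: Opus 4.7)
My plan has three steps: verify that $R_{\Ga_\infty}(f) \in \CA_n$, compute its $L^2$-trace by identifying the ``diagonal'' component $T_e$ in the tensor decomposition, and then compare $\Ga_n$- and $\Ga$-conjugacy classes to get $n$-independence.

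Membership is immediate: $R_{\Ga_\infty}(f)$ acts by right convolution with $f$, which commutes with every left $G$-translation on $L^2(\Ga_\infty\bs G)$, hence in particular with the action of $\Ga_n/\Ga_\infty$ by left translation. Therefore $R_{\Ga_\infty}(f)$ lies in the commutant $\CA_n = L(\Ga_n/\Ga_\infty)'$. Under the isomorphism $\CA_n \cong \CB(L^2(\Ga_n\bs G)) \otimes \VN(\Ga_n/\Ga_\infty)$ I write $R_{\Ga_\infty}(f) = \sum_{v} T_v \otimes R_v$; since $\tau_n(R_v) = \delta_{v,e}$, one has $(\tr_{L^2(\Ga_n\bs G)}\otimes\tau_n)(R_{\Ga_\infty}(f)) = \tr_{L^2(\Ga_n\bs G)}(T_e)$. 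Concretely, for $\phi \in L^2(\Ga_n\bs G)$, the function $\Psi^{-1}(\phi\otimes\delta_e)$ is the $\Ga_\infty$-invariant extension by zero of $\phi|_{\CF_n}$ to all of $\Ga_\infty\CF_n$. Unfolding $R_{\Ga_\infty}(f)$ on this function and restricting the result to $\CF_n$ identifies $T_e$ as the integral operator on $L^2(\Ga_n\bs G)$ with kernel $K(x,w) = \sum_{\sigma\in\Ga_\infty} f(x^{-1}\sigma w)$. By compactness of $\supp f$ and discreteness of $\Ga_\infty$, this sum is locally finite, and the standard trace-formula machinery (cf.\ Remark \ref{rm2.5}) gives $\tr(T_e) = \int_{\Ga_n\bs G} K(x,x)\,dx = \int_{\Ga_n\bs G}\sum_{\sigma\in\Ga_\infty} f(x^{-1}\sigma x)\,dx$. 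Splitting this inner sum by $\Ga_n$-conjugacy classes in $\Ga_\infty$ (permissible because $\Ga_\infty \triangleleft \Ga_n$) and unfolding each integral against $\Ga_n/\Ga_{n,\ga}$ yields the claimed orbital-integral expression after dividing by $\vol(\Ga_n\bs G)$.

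For the $n$-independence assertion, suppose each $\Ga_n$ is normal in a common lattice $\Ga$, so that $[\Ga:\Ga_n] < \infty$ and $\vol(\Ga_n\bs G) = [\Ga:\Ga_n]\vol(\Ga\bs G)$. I regroup the sum over $\Ga_n$-conjugacy classes in $\Ga_\infty$ into $\Ga$-conjugacy classes: a given $[\ga]_\Ga$ breaks into $[\Ga:\Ga_n\Ga_{\Ga,\ga}]$ many $\Ga_n$-classes (where $\Ga_{\Ga,\ga}$ denotes the centraliser of $\ga$ in $\Ga$), while the volume factor satisfies $\vol(\Ga_{n,\ga}\bs G_\ga) = [\Ga_{\Ga,\ga}:\Ga_n\cap\Ga_{\Ga,\ga}]\vol(\Ga_{\Ga,\ga}\bs G_\ga)$. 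The product of these two indices equals $[\Ga:\Ga_n]$ via $[\Ga_n\Ga_{\Ga,\ga}:\Ga_n] = [\Ga_{\Ga,\ga}:\Ga_n\cap\Ga_{\Ga,\ga}]$, which cancels the same factor appearing in $\vol(\Ga_n\bs G)$, leaving the manifestly $n$-independent expression $\tr^{(2)}_{\Ga,\Ga_\infty}(R_{\Ga_\infty}(f))$.

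The main obstacle is the rigorous identification of $T_e$ and its trace-class property, since $R_{\Ga_\infty}(f)$ itself is generally \emph{not} trace class on $L^2(\Ga_\infty\bs G)$. The key point is that $T_e$ lives on the \emph{smaller} space $L^2(\Ga_n\bs G)$, where compactness of $\supp f$ together with discreteness of $\Ga_\infty$ ensures that its kernel is a finite sum locally; from there the classical trace-formula machinery takes over and the remaining computations are a matter of careful bookkeeping of conjugacy classes and centralisers.
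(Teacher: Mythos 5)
Your proposal is correct and follows essentially the same route as the paper: both identify $R_{\Ga_\infty}(f)$ as an integral operator, extract the $e$-component (your $T_e$ is exactly the diagonal block $k_f(x,y)$ on $\CF_n\times\CF_n$ in the paper), compute its trace as the integral over the diagonal, unfold into orbital integrals over $\Ga_n$-classes in $\Ga_\infty$, and regroup into $\Ga$-classes using the index identity $[\Ga:\Ga_n\Ga_\ga]\,[\Ga_\ga:\Ga_{n,\ga}]=[\Ga:\Ga_n]$, which is precisely the paper's $\#\bigl([\ga]_\Ga/\Ga_n\bigr)\,[\Ga_\ga:\Ga_{n,\ga}]/[\Ga:\Ga_n]=1$. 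The only cosmetic difference is that you phrase the diagonal extraction via the trace $\tau_n$ on $\VN(\Ga_n/\Ga_\infty)$ rather than via the explicit kernel identity $k_f(x,vy)=k_f(v^{-1}x,y)$.
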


\begin{proof}
For $\phi\in L^2(\Ga_\infty\bs G)$ we have
\begin{align*}
R_{\Ga_\infty}(f)\phi(x)&= \int_G f(y)\phi(xy)\,dy
=\int_Gf(x^{-1}y)\phi(y)\,dy\\
&= \int_{\Ga_\infty\bs G}\underbrace{\sum_{\ga\in\Ga_\infty}f(x^{-1}\ga y)}_{=k_f(x,y)}\phi(y)\,dy\\
&=\sum_{v\in V_n}\int_{v\CF_n}k_f(x,y)\phi(y)\,dy
=\int_{\CF_n}\sum_{v\in V_n}k_f(x,vy)\phi(vy)\,dy.
\end{align*}
Now, as $\Ga_\infty$ is normal,
\begin{align*}
k_f(x,vy)=\sum_{v\in V_n}\sum_{\ga\in\Ga_\infty}f(x^{-1}\ga vy)&=\sum_{v\in V_n}\sum_{\ga\in\Ga_\infty}f(x^{-1}v\ga y)=k_f(v^{-1}x,y), 
\end{align*}
so that 
\begin{align*}
R_{\Ga_\infty}(f)\phi(x)&=\int_{\CF_n}\sum_{v\in V_n}k_f(v^{-1}x,y)\phi(vy)\,dy.
\end{align*}
As the trace of an integral operator on $L^2(\Ga_n\bs G)$ is given by the integral over the diagonal of the kernel (Proposition 9.3.1 of \cite{HA2}), we get
\begin{align*}
\tr_{\Ga_n,\Ga_\infty}^{(2)}(R_{\Ga_\infty}(f))&=\frac1{\vol(\Ga_n\bs G)}
\int_{\CF_n}k_f(x,x)\,dx
=\frac1{\vol(\Ga_n\bs G)}
\int_{\Ga_n\bs G}\sum_{\ga\in\Ga_\infty} f(x^{-1}\ga x)\,dx\\
&=\frac1{\vol(\Ga_n\bs G)}
\sum_{[\ga]_{\Ga_n}\subset\Ga_\infty}\int_{\Ga_n\bs G}\sum_{\ga'\in[\ga]_{\Ga_n}} f(x^{-1}\ga' x)\,dx\\
&=\frac1{\vol(\Ga_n\bs G)}
\sum_{[\ga]_{\Ga_n}\subset\Ga_\infty}\int_{\Ga_n\bs G}\sum_{\sigma\in\Ga_n/\Ga_{n,\ga}} f((\sigma x)^{-1}\ga \sigma x)\,dx\\
&=\frac1{\vol(\Ga_n\bs G)}
\sum_{[\ga]_{\Ga_n}\subset\Ga_\infty}\int_{\Ga_{n,\ga}\bs G} f( x^{-1}\ga x)\,dx\\
&=\frac1{\vol(\Ga_n\bs G)}
\sum_{[\ga]_{\Ga_n}\subset\Ga_\infty}\vol(\Ga_{n,\ga}\bs G_\ga)\,\CO_\ga(f).\\
\end{align*}
Now assume all $\Ga_n$ are normal in some $\Ga$.
Then
\begin{align*}
\tr_{\Ga_n,\Ga_\infty}^{(2)} R_{\Ga_\infty}(f)&=\sum_{[\ga]_\Ga\subset\Ga_\infty}
\sum_{[\ga]_{\Ga_n}\subset [\ga]_\Ga}
\frac{\vol(\Ga_{n,\ga}\bs G_\ga)}{\vol(\Ga_n\bs G)}\,\CO_\ga(f)\\
&=\sum_{[\ga]_\Ga\subset\Ga_\infty}
\sum_{[\ga]_{\Ga_n}\subset [\ga]_\Ga}
\frac{[\Ga_\ga:\Ga_{n,\ga}]}{[\Ga:\Ga_n]}
\frac{\vol(\Ga_{\ga}\bs G_\ga)}{\vol(\Ga\bs G)}\,\CO_\ga(f)\\
&=\sum_{[\ga]_\Ga\subset\Ga_\infty}
\underbrace{\#\([\ga]_\Ga/\Ga_n\)
\frac{[\Ga_\ga:\Ga_{n,\ga}]}{[\Ga:\Ga_n]}}_{=1}
\frac{\vol(\Ga_{\ga}\bs G_\ga)}{\vol(\Ga\bs G)}\,\CO_\ga(f).\mqed
\end{align*}
\end{proof}

\begin{proposition}\label{prop4.3}
Let $(\Ga_n)$ be a sequence of cocompact lattices.
Let $\Ga_\infty$ be a common normal subgroup.
Then $\Ga_n\tto{Pl}\Ga_\infty$ if and only if the sequence
$$
\frac{\tr R_{\Ga_n}(f)}{\vol\(\Ga_n\bs G\)}-\tr_{\Ga_n,\Ga_\infty}^{(2)} R_{\Ga_\infty}(f)
$$
tends to zero for every $f\in C_c^\infty(G)$.

\end{proposition}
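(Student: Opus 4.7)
The plan is to identify the quantity
$$\frac{\tr R_{\Ga_n}(f)}{\vol(\Ga_n\bs G)} - \tr_{\Ga_n,\Ga_\infty}^{(2)} R_{\Ga_\infty}(f)$$
as, up to the sup-norm of $f$, precisely the integral appearing in the definition of Plancherel convergence of $(\Ga_n)$ to $\Ga_\infty$. Once this identification is in place, both implications follow from elementary bounding by a bump function.

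First I would combine the geometric side of the trace formula recorded in Remark \ref{rm2.5},
$$\tr R_{\Ga_n}(f) = \int_{\Ga_n \bs G} \sum_{\ga \in \Ga_n} f(x^{-1}\ga x)\, dx,$$
with the intermediate identity extracted from the proof of Lemma \ref{lem4.2},
$$\tr_{\Ga_n,\Ga_\infty}^{(2)} R_{\Ga_\infty}(f) = \frac{1}{\vol(\Ga_n\bs G)} \int_{\Ga_n \bs G} \sum_{\ga \in \Ga_\infty} f(x^{-1}\ga x)\, dx.$$
Dividing the first identity by $\vol(\Ga_n\bs G)$ and subtracting the second, the overlap on $\Ga_\infty$ cancels and one is left with
$$\frac{\tr R_{\Ga_n}(f)}{\vol(\Ga_n\bs G)} - \tr_{\Ga_n,\Ga_\infty}^{(2)} R_{\Ga_\infty}(f) = \int_{\Ga_n\bs G} \sum_{\ga \in \Ga_n \sm \Ga_\infty} f(x^{-1}\ga x)\, dP_{\Ga_n\bs G}(x).$$

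For the implication $\Ga_n \tto{Pl} \Ga_\infty \Rightarrow$ vanishing, I would set $C = \supp(f)$ and bound the integrand in absolute value by $\norm{f}_\infty \cdot \#\(x^{-1}(\Ga_n\sm\Ga_\infty)x \cap C\)$; the Plancherel-convergence hypothesis then forces the right-hand side to $0$. For the converse, given a compact $C \subset G$, I would produce a nonnegative $f \in C_c^\infty(G)$ with $f \ge \1_C$, so that
$$\#\(x^{-1}(\Ga_n\sm\Ga_\infty)x \cap C\) \le \sum_{\ga \in \Ga_n\sm\Ga_\infty} f(x^{-1}\ga x),$$
integrate over $\Ga_n\bs G$ with respect to $P_{\Ga_n\bs G}$, and invoke the identity of the previous paragraph together with the hypothesis to obtain Plancherel convergence.

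I do not expect a genuine obstacle; the argument is essentially bookkeeping on top of Lemma \ref{lem4.2}. The only mildly delicate point is to justify the existence of a nonnegative $f \in C_c^\infty(G)$ with $f \ge \1_C$ in the generality of Bruhat's test functions (Definition \ref{def1.2}). This is handled routinely: choose an open subgroup $H \subset G$ with $H/H^0$ compact, cover $C$ by finitely many open left $H$-cosets $g_1H,\dots,g_kH$, and on each coset pull back a smooth bump on a Lie quotient $H/N$ trivialising the relevant neighbourhood; summing the finitely many contributions produces the desired $f$.
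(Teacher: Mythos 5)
Your argument is correct and follows essentially the same route as the paper: both identify the difference $\frac{\tr R_{\Ga_n}(f)}{\vol(\Ga_n\bs G)}-\tr_{\Ga_n,\Ga_\infty}^{(2)} R_{\Ga_\infty}(f)$ with $\int_{\Ga_n\bs G}\sum_{\ga\in\Ga_n\sm\Ga_\infty}f(x^{-1}\ga x)\,dP_{\Ga_n\bs G}(x)$ (the paper via the orbital-integral form of Lemma \ref{lem4.2}, you by directly subtracting the two kernel-diagonal identities) and then compare this with the counting integral in the definition of Plancherel convergence using $\norm f_\infty$ in one direction and a dominating test function $f\ge\1_C$ in the other. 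Your explicit construction of such an $f$ in the Bruhat setting is a welcome detail the paper leaves implicit.
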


\begin{proof}
The difference $\frac{\tr R_{\Ga_n}(f)}{\vol\(\Ga_n\bs G\)}-\tr_{\Ga_n,\Ga_\infty}^{(2)} R_{\Ga_\infty}(f)$ equals
$$
\frac1{\vol(\Ga_n\bs G)}
\sum_{[\ga]_{\Ga_n}\subset(\Ga_n\sm\Ga_\infty)}\vol(\Ga_{n,\ga}\bs G_\ga)\,\CO_\ga(f).
$$
The same computation as in the proof of Lemma \ref{lem4.2} shows that this equals
$$
\frac1{\vol(\Ga_n\bs G)}
\sum_{[\ga]_{\Ga_n}\subset(\Ga_n\sm\Ga_\infty)}\int_{\Ga_n\bs G}\sum_{\ga'\in[\ga]_{\Ga_n}} f(x^{-1}\ga' x)\,dx.
$$
Form here the claim is clear.
\end{proof}

\begin{theorem}
Let $(\Ga_n)$ be a sequence of cocompact lattices.
Let $\Ga_\infty$ be a common normal subgroup and assume each $\Ga_n$, $1\le n\le\infty$, is normal in some lattice $\Ga$.
Then the following are equivalent:
\begin{enumerate}[\rm (a)]
\item  $(\Ga_n)$ is BS-convergent to $\Ga_\infty$.
\item
$(\Ga_n)$ is Plancherel convergent to $\Ga_\infty$.
\item
For each $f\in C_c^\infty(G)$ one has 
$$
\frac{\tr R_{\Ga_n}(f)}{\vol(\Ga_n\bs G)}\ \longrightarrow\ \tr_{\Ga,\Ga_\infty}^{(2)}(R_\Ga(f)),
$$
as $n\to\infty$.
\end{enumerate}
\end{theorem}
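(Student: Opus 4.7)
The theorem is essentially a bookkeeping result, assembling the three preceding statements. The plan is to split the proof into (a)$\Leftrightarrow$(b) and (b)$\Leftrightarrow$(c), and to invoke what has already been proved for each.

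The equivalence (a)$\Leftrightarrow$(b) is precisely Theorem~\ref{thm3.3}. The only caveat is that that theorem assumed the sequence $(\Ga_n)$ to be uniformly discrete; here this should either be taken as an implicit standing hypothesis in the relative setup (it is natural given that all $\Ga_n$ lie inside the fixed lattice $\Ga$), or one uses that the direction (b)$\Rightarrow$(a) needs no discreteness assumption at all, as recorded at the end of Theorem~\ref{thm3.3}.

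The substance of the present theorem is the equivalence (b)$\Leftrightarrow$(c). By Proposition~\ref{prop4.3}, condition (b) is equivalent to
$$
\frac{\tr R_{\Ga_n}(f)}{\vol(\Ga_n\bs G)}-\tr_{\Ga_n,\Ga_\infty}^{(2)}\(R_{\Ga_\infty}(f)\)\ \longrightarrow\ 0,\qquad \forall\, f\in C_c^\infty(G).
$$
Now invoke Lemma~\ref{lem4.2}: since every $\Ga_n$ is normal in the fixed lattice $\Ga$, the lemma gives the identity
$$
\tr_{\Ga_n,\Ga_\infty}^{(2)}\(R_{\Ga_\infty}(f)\)=\tr_{\Ga,\Ga_\infty}^{(2)}\(R_{\Ga_\infty}(f)\),
$$
so the subtracted quantity is actually independent of $n$. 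Substituting turns the convergence condition above into exactly (c), where the operator $R_\Ga(f)$ appearing in (c) is to be read as $R_{\Ga_\infty}(f)$, i.e.\ the operator on $L^2(\Ga_\infty\bs G)$ to which the $L^2$-trace $\tr_{\Ga,\Ga_\infty}^{(2)}$ is applied.

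I do not anticipate any substantial obstacle: the argument is pure assembly. The only points requiring care are notational -- interpreting correctly the operator appearing in (c) -- and, if one wishes to be scrupulous, checking that the uniform-discreteness hypothesis of Theorem~\ref{thm3.3} is either inherited from the present hypotheses or unnecessary for the direction in question.
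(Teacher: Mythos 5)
Your assembly of the proof is the same as the paper's: (a)$\Leftrightarrow$(b) via Theorem~\ref{thm3.3}, and (b)$\Leftrightarrow$(c) via Proposition~\ref{prop4.3} combined with the $n$-independence statement of Lemma~\ref{lem4.2}; your reading of $R_\Ga(f)$ in (c) as the operator $R_{\Ga_\infty}(f)$ on $L^2(\Ga_\infty\bs G)$ is also the intended one. The one point you leave unresolved, however, is the uniform discreteness needed for (a)$\Rightarrow$(b), and neither of your two escape routes works as stated: it is not an ``implicit standing hypothesis'' of the section, and falling back on the fact that (b)$\Rightarrow$(a) holds without uniform discreteness only gives one direction, not the equivalence. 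What you should say instead is that the hypothesis that every $\Ga_n$ is contained in the fixed lattice $\Ga$ (which is cocompact, since $\Ga\bs G$ is a quotient of the compact $\Ga_n\bs G$) \emph{implies} uniform discreteness of the sequence: if $x_k^{-1}\ga_k x_k\to e$ with $\ga_k\in\Ga^\star$, write $x_k=\delta_k c_k$ with $\delta_k\in\Ga$ and $c_k$ in a compact set; after passing to a subsequence $c_k\to c$ one gets $\delta_k^{-1}\ga_k\delta_k\to e$ inside the discrete group $\Ga$, a contradiction. Hence there is a single unit-neighborhood $U$ with $x^{-1}\Ga x\cap U=\{1\}$ for all $x$, and a fortiori the same holds for every $\Ga_n\subset\Ga$. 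With that supplied, your argument coincides with the paper's.
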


\begin{proof}
The equivalence (a)$\Leftrightarrow$(b) follows from Theorem \ref{thm3.3}, as the condition that all $\Ga_n$ lie in one lattice $\Ga$ implies that the sequence $(\Ga_n)$ is uniformly discrete.

(b)$\Leftrightarrow$(c) is Proposition \ref{prop4.3} together with Lemma \ref{lem4.2}.
\end{proof}

Suppose now that $G$ is second countable and type I.
Then $G$ acts on $L^2(\Ga_\infty\bs G)$ and this action defines a unitary representation of $G$.
 Theorem 8.6.6 of \cite{Dix} says that there are mutually singular measures $\what\mu_{\Ga_\infty,1},\what\mu_{\Ga_\infty,2},\dots$ and $\what\mu_{\Ga_\infty,\infty}$ on the unitary dual $\what G$ such that
\begin{align*}
L^2(\Ga_\infty\bs G)\cong \what{\bigoplus_{1\le j\le\infty}}\ell^2(j)\ \what\otimes\int_{\what G}\zeta\,d\what\mu_{\Ga_\infty,j}(\zeta),
\end{align*}
where $\ell^2(\infty)=\ell^2(\N)$ and the hats over the sum and the product refer to  Hilbert space completions.
The measures $\mu_j$ are unique up to equivalence.
We choose representatives and  write $H(\mu_j)$ for the Hilbert space of $L^2$-sections of the canonical Hilbert bundle over $\supp(\mu_j)$.
Then for every $\phi\in L^2(\Ga_\infty\bs G)$ one has
\begin{align*}
\norm\phi^2=\sum_{j\in\N\cup\{\infty\}}\norm{\phi_j}^2_{\ell^2(j)\what\otimes H(\mu_j)},
\end{align*}
where $\phi=\sum_j\phi_j$ is the above decomposition.

\begin{definition}
We  say that
$$
\what\mu_H=\sum_{j\in\N\cup\{\infty\}}j\mu_j
$$
is the \e{spectral measure} attached to $H$.
Note that $\what\mu_{\{1\}}$ is in general different from the Plancherel measure of $G$, as the latter neglects multiplicities.
\end{definition}

Let $\CA$ be the von Neumann algebra acting on $L^2(\Ga_\infty\bs G)$ defined as the commutant of the left translation action of $\Ga$.
Then $\CA=\CB\(L^2(\Ga\bs G)\)\what\otimes\ell^2(\Ga/\Ga_\infty)$ and the  $L^2$-trace $\tr_{\Ga,\Ga_\infty}^{(2)}$ is defined on $\CA$.
Let $\CB\subset\CA$ be the von Neumann subalgebra acting on $L^2(\Ga_\infty\bs G)$ which is generated by the right translation action $R(G)$ of $G$.
Then $\CB$ respects the direct integral decomposition of $L^2(\Ga_\infty\bs G)$ and 
there exist traces $\tau_{\Ga_\infty,\pi}$, $\pi\in\what G$ such that for $f\in C_c^\infty(G)$ one has
$$
\tr_{\Ga,\Ga_\infty}^{(2)}(R_\Ga(f))
=\sum_{1\le j\le\infty} \int_{\what G}\tau_{\Ga_\infty,\pi}(\pi(f))\,d\what\mu_{\Ga_\infty,j}(\pi).
$$
In the next section we shall compute the  measure $\what\mu_{\Ga_\infty,j}$ in a special case.

\section{An example of a thin group}

Let $\Ga\subset G=\PSL_2(\R)$ be a torsion-free, cocompact lattice.
Let $\chi:\Ga\to\Z$ be a surjective group homomorphism.
For $n\in\N$ let
$$
\Ga_n=\chi^{-1}(n\Z)
$$
and set $\Ga_\infty=\ker(\chi)$.
Then $\Gamma_\infty$ is a discrete subgroup which is not a lattice, but still Zariski dense in $G$, i.e., a so called thin group \cite{thin}.

Let $\ga_0\in\Ga$ be an element with $\chi(\ga_0)=1$, then $\Ga$ is generated by $\Ga_\infty$ together with $\ga_0$.

For each $\la\in\T=\{z\in\C:|z|=1\}$ we get a group homomorphism $\Ga\to\T$, $\ga\mapsto \la^{\chi(\ga)}$.
Let
$L^2(\Ga\bs G,\la)$ denote the space of all measurable functions (modulo null-functions) $f:G\to\C$ which satisfy
\begin{itemize}
\item $f(\ga x)=\la^{\chi(\ga)}f(x)$,
\item $\displaystyle\int_{\Ga\bs G}|f(x)|^2\,dx<\infty$.
\end{itemize}
Let $R_\la$ denote the unitary representation of $G$ given by right translation on the Hilbert space $L^2(\Ga\bs G,\la)$.
For any given $f\in C_c^\infty(G)$ the operator $R_\la(f)$ is trace class \cite{HA2} and one has
$$
\tr R_\la(f)=\sum_{[\ga]}\la^{\chi(\ga)}\vol(\Ga_\ga\bs G_\ga)\CO_\ga(f).
$$
This is a finite sum of the form $\sum_{n=-N}^Nc_n \la^n$ for some coefficients $c_l$ independent of $\la$.
Therefore the family $(R_\la)_{\la\in\T}$ depends continuously on $\la$ in the sense that there is maps $\pi_1,\pi_2,\dots$ from $\T$ to $\what G$ with the property that for each $j$ the set of $K$-types of $\pi_j(\la)$ does not depend on $\la$ and the Casimir eigenvalue $\pi_j(\la)(\Om)$ depends on $\la$ as a continuous function.
In particular, the family $(L^2(\Ga\bs G,\chi_\la))_{\la\in\T}$ forms a Hilbert-bundle and one thus gets a unitary representation $R_\T$ of $G$ on the section space
$$
\int_{\T}^\oplus L^2(\Ga\bs G,\la)\,d\la,
$$
where the integral is with respect to the normalised Haar measure $d\la$ on the group $\T$.

\begin{theorem}
As a unitary $G$-representation, the right translation representation of $G$ on $L^2(\Ga_\infty\bs G)$ is isomorphic with the direct Hilbert integral
$$
\int_{\T}^\oplus L^2(\Ga\bs G,\chi_\la)\,d\la
$$
or, equivalently,
$$
\bigoplus_{j=1}^\infty \int_\T^\oplus \pi_j(\la)\,d\la.
$$
\end{theorem}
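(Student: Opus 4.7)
The theorem asks for a Plancherel decomposition in the $\Ga/\Ga_\infty\cong\Z$ direction, whose Pontryagin dual is $\T$; this is why the base of the direct integral is $\T$. The plan is to write down an explicit unitary $G$-equivariant map from $L^2(\Ga_\infty\bs G)$ to the first direct integral via a fibrewise Fourier series in $\la$, verify that it is an isometry via Plancherel on $\Z$, and then deduce the second isomorphism by applying the fibrewise decomposition $L^2(\Ga\bs G,\chi_\la)\cong\bigoplus_j\pi_j(\la)$.

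First I would fix a fundamental domain $\CF$ for $\Ga$ in $G$, whose existence was established in Section 3, and the element $\ga_0\in\Ga$ with $\chi(\ga_0)=1$ that was singled out in the setup. Since $\chi$ is surjective, $\{\ga_0^n\}_{n\in\Z}$ is a full set of representatives for $\Ga/\Ga_\infty$, so $\widetilde\CF:=\bigsqcup_{n\in\Z}\ga_0^n\CF$ is a fundamental domain for $\Ga_\infty$. Restriction to $\widetilde\CF$ then gives the canonical unitary identification $L^2(\Ga_\infty\bs G)\cong L^2(\CF)\,\what\otimes\,\ell^2(\Z)$, $\phi\mapsto(x\mapsto\phi(\ga_0^n x))_{n\in\Z}$.

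Next I would define
$$
\Phi\colon L^2(\Ga_\infty\bs G)\longrightarrow \int_\T^{\oplus} L^2(\Ga\bs G,\chi_\la)\,d\la, \qquad \Phi(\phi)(\la)(x)=\sum_{n\in\Z}\la^n\,\phi(\ga_0^n x) \text{ for } x\in\CF,
$$
extending from $\CF$ to $G$ by the rule $\Phi(\phi)(\la)(\ga x)=\la^{\chi(\ga)}\,\Phi(\phi)(\la)(x)$ for $\ga\in\Ga$. By construction each $\Phi(\phi)(\la)$ transforms by $\chi_\la$ under $\Ga$, and Plancherel on $\Z$ combined with right-invariance of Haar measure gives the isometry identity
$$
\int_\T\int_{\CF}\Bigl|\sum_n\la^n\phi(\ga_0^n x)\Bigr|^2 dx\,d\la \;=\; \sum_{n\in\Z}\int_\CF|\phi(\ga_0^n x)|^2\,dx \;=\;\|\phi\|_{L^2(\Ga_\infty\bs G)}^2.
$$
Right $G$-equivariance is immediate because the substitution $x\mapsto xg$ acts on every summand in parallel. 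Surjectivity is the inverse Fourier transform on $\Z$: a measurable section $\la\mapsto f_\la$ is reconstructed from its Fourier coefficients $n\mapsto\int_\T\la^{-n}f_\la(x)\,d\la$, which assemble into an element of $L^2(\Ga_\infty\bs G)$ by reversing the calculation.

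For the second, equivalent form I would invoke the decomposition $L^2(\Ga\bs G,\chi_\la)\cong\bigoplus_{j\ge 1}\pi_j(\la)$ discussed in the paragraph preceding the theorem; the continuous dependence of $\pi_j(\la)$ on $\la$ (continuity of Casimir eigenvalues and stability of $K$-types) makes these fibrewise decompositions assemble into measurable Hilbert subbundles over $\T$, so that the countable direct sum can be interchanged with the direct integral. The main technical obstacle is exactly this last point: formulating the measurable Hilbert bundle structure on $\{L^2(\Ga\bs G,\chi_\la)\}_{\la\in\T}$ and verifying that $\Phi(\phi)(\la)$, after its $\chi_\la$-equivariant extension from $\CF$ to $G$, is a measurable section, which I would handle by testing against $f\in C_c^\infty(G)$ and appealing to the continuity of $\la\mapsto\tr R_\la(f)$ recorded above.
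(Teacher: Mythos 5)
Your argument is essentially the paper's proof run in the opposite direction: the paper defines the inverse map $\Phi(s)(x)=\int_\T s_\la(x)\,d\la$ from the direct integral into $L^2(\Ga_\infty\bs G)$, checks the isometry by the same fundamental-domain computation with Plancherel for the dual pair $\Z\leftrightarrow\T$, and proves surjectivity by an orthogonality argument, so your map and the paper's are mutually inverse Fourier transforms along $\Ga/\Ga_\infty\cong\Z$. One sign needs fixing: with $\la^{+n}$ in the sum and the extension rule $f(\ga x)=\la^{\chi(\ga)}f(x)$, your map intertwines right translation with the action on $\int_\T^\oplus L^2(\Ga\bs G,\chi_{\ol{\la}})\,d\la$ rather than on $\int_\T^\oplus L^2(\Ga\bs G,\chi_{\la})\,d\la$ (since $\phi(\ga_0^n xg)=\phi(\ga_0^{n+\chi(\ga)}y)$ shifts the Fourier index by $+\chi(\ga)$, producing $\la^{-\chi(\ga)}$ instead of $\la^{\chi(\ga)}$); replacing $\la^n$ by $\la^{-n}$, equivalently substituting $\la\mapsto\ol{\la}$, which preserves the Haar measure of $\T$, restores the equivariance you assert.
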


\begin{proof}
We define a map $\Phi:\int_{\T}^\oplus L^2(\Ga\bs G,\chi_\la)\,d\la\to L^2(\Ga_\infty\bs G)$ by setting
$$
\Phi(s)(x)=\int_\T s_\la(x)\,d\la.
$$
Then $f=\Phi(s)$ is $\Ga_\infty$ left-invariant and the so defined map $\Phi$ is $G$-equivariant.
We show that $\Phi$ is an isometry.
Fixing a fundamental set $\CF$ for $\Ga$ we compute
\begin{align*}
\norm{\Phi(s)}^2&=\int_{\Ga_\infty\bs G}|f(x)|^2\,dx= \int_{\Ga_\infty\bs G}\left|\int_\T s_\la(x)\,d\la\right|^2\,dx\\
&=\sum_{k\in\Z}\int_\CF\left|\int_\T s_\la(\ga_0^kx)\,d\la\right|^2\,dx\\
&=\int_\CF\sum_{k\in\Z}\left|\int_\T \la^ks_\la(x)\,d\la\right|^2\,dx
=\int_\CF\int_\T\left|s_\la(x)\right|^2\,d\la\,dx= \norm s^2,
\end{align*}
where in the last line we have used the Plancherel formula for the circle group $\T$.

Finally we need to show that $\Phi$ is surjective.
For this let $f\in L^2(\Ga_\infty\bs G)$ be orthogonal to the image of $\Phi$.
Then for any $s\in \int_{\T}^\oplus L^2(\Ga\bs G,\chi_\la)\,d\la$ we have
\begin{align*}
0&=\sp{f,\Phi(s)}\\
&= \int_{\Ga_\infty\bs G}f(x)\int_\T\ol{s_\la(x)}\,d\la\,dx\\
&=\sum_{k\in\Z}\int_\CF f(\ga_0^kx)\int_\T\ol{s_\la(\ga_0^k x)}\,d\la\,dx\\
&=\int_\T\sum_{k\in\Z}\int_\CF f(\ga_0^kx)\ol{s_\la(x)}\,dx\,\ol{\la}^k\,d\la.
\end{align*}
We can prescribe $s_\la$ to be $\la$-independent on $\CF$. Then we get
$$
0=\int_\CF f(x)\ol{s(x)}\,dx.
$$
On $\CF$ we can choose $s$ arbitrarily, so that we conclude $f=0$.
\end{proof}

We can get some more detailed information on the direct integrals occurring in the theorem.
For each $\la\in\T$ we define a line bundle $E_\la$ over $\Ga\bs G$ by
$$
E_\la=\Ga\bs (G\times\C),
$$
where $\Ga$ acts on $G\times\C$ via
$\ga(x,z)=(\ga x,\la^{-\chi(\ga)}z)$.
The union $E=\bigcup_{\la\in\T}E_\la$ can be viewed as a line bundle over $\T\times (\Ga\bs G)$.
As such, it has a natural analytic structure given as follows: There exists $f\in C^\om(G)$ an analytic function on $G$ such that the sum
$$
s_\la(x)=\sum_{\ga\in\Ga}\la^{-\chi(\ga)}f(\ga x)
$$ 
converges absolutely in a neighborhood of $\T\times G$ inside its complexification $\C^\times\times G_\C$.
One gets such $f$ for instance  a gaussian function in Iwasawa coordinates.
Sections of this kind also give local trivialisations of the line bundle $E$.
Note that under this trivialisation, the Casimir operator $\Om$ turns into an analytic family $(\Om_\la)_{\la\in\T}$ of operators on $L^2(\Ga\bs G)$ given by
$$
\Om_\la(f)=s_\la^{-1}\Om(s_\la f).
$$

\begin{proposition}
The sections $\pi_j:\T\to \bigcup_{\la\in\T}L^2(\Ga\bs G,\la)$ can be chosen to be analytic.
Then the Casimir eigenvalues $\la\mapsto\pi_j(\la)(\Om)$ are analytic maps on $\T$.
For each $j$, the direct integral $\int_\T^\oplus\pi_j(\la)\,d\la$ is either an infinite multiple of an irreducible representation or does not have any irreducible sub-representation at all.
\end{proposition}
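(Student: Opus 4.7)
The plan is to establish the analyticity claim via analytic perturbation theory, and then deduce the dichotomy about sub-representations from the rigidity that analyticity imposes on the Casimir eigenvalue.

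First I would pass from the given trivialization by the analytic section $s_\la$ to a $K$-equivariant version (for instance by averaging $s_\la$ over $K$ on the right) so that the $K$-isotypic decomposition of $L^2(\Ga\bs G,\la)$ is carried isomorphically to a fixed decomposition of $L^2(\Ga\bs G)$. In this trivialization the family $\la\mapsto\Om_\la$ becomes a real-analytic family of self-adjoint operators preserving the $K$-isotypic summands, and since $\Ga\bs G$ is compact it has compact resolvent on each of them. The Kato-Rellich theorem then supplies real-analytic choices of eigenvalues and spectral projections. Because every irreducible unitary representation of $\PSL_2(\R)$ is determined up to equivalence by its set of $K$-types together with its Casimir eigenvalue, one can organise the irreducible summands of $L^2(\Ga\bs G,\la)$ as sections $\pi_1(\la),\pi_2(\la),\dots$ whose $K$-types are independent of $\la$ and with $c_j(\la):=\pi_j(\la)(\Om)$ depending real-analytically on $\la$.

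For the dichotomy, suppose $\sigma\subset\int_\T^\oplus\pi_j(\la)\,d\la$ is a non-zero closed irreducible $G$-invariant subspace. The Casimir lies in the center of the enveloping algebra, so by Schur's lemma it acts on $\sigma$ as a scalar $c_\sigma$; in the direct integral it acts fibrewise by $c_j(\la)\Id$. Hence $\sigma$ is concentrated on the Borel set $S=\{\la\in\T:c_j(\la)=c_\sigma\}$. By the analyticity of $c_j$, either $S$ is finite---a $d\la$-null set, forcing $\sigma=0$ against irreducibility---or $S=\T$. In the latter case $c_j$ is constant, and together with the constancy of the $K$-types this means every $\pi_j(\la)$ is unitarily equivalent to a single irreducible $\pi$. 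The analytic Hilbert bundle $\la\mapsto\pi_j(\la)$ over $\T$ then has all fibers isomorphic to the model space of $\pi$, and since the unitary group of a separable infinite-dimensional Hilbert space is contractible (Kuiper), any such bundle over $\T$ is trivial, giving $\int_\T^\oplus\pi_j(\la)\,d\la\cong L^2(\T)\,\what\otimes\,\pi$, an infinite Hilbert sum of copies of $\pi$.

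The main obstacle is the first step. One has to check that the trivialization can be chosen to yield a genuine analytic family $\Om_\la$ in the sense of Kato, and in particular $K$-equivariance must be arranged so that the isotypic decomposition is respected under trivialization. A secondary subtlety is that Kato-Rellich only produces \emph{local} analytic branches of eigenvalues, so one must verify that these branches can be consistently relabelled on all of $\T$ to give well-defined global sections $\pi_j$; this is possible because $\T$ is one-dimensional and the analytic continuation of eigenvalues of a self-adjoint analytic family is unambiguous.
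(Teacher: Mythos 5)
Your argument is correct and follows essentially the same route as the paper: analytic perturbation theory in the sense of Kato applied to the trivialized family $\Om_\la$ for the analyticity of the sections, and then the dichotomy that a real-analytic function on $\T$ is either constant or has finite (hence null) level sets, which forces any irreducible sub-representation to be concentrated on a null set unless the Casimir eigenvalue is constant. The extra details you supply (the $K$-equivariant trivialization, Schur's lemma for the Casimir on an irreducible subspace, and the triviality of the constant field in the first case) are exactly the steps the paper leaves implicit.
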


\begin{proof}
After taking analytic local trivialisations as given above, the first assertion follows from the theory of analytic families of operators as in \cite{Kato}. 
As the map $\eta:\la\mapsto\pi_j(\la)(\Om)$ is analytic, it is either constant, which yields the first case, or it is nowhere constant and each value $z\in\T$ has only finitely many pre-images, i.e., $\eta^{-1}(z)$ is a set of measure zero in $\T$.
\end{proof}

{\bf Question.} 
If for a given $j$, any $\pi_j(\la)$ is a discrete series representation, then $\pi_j$ is the constant map and we are in the first case of the theorem.
It is not clear whether this is the only possibility, i.e., if for some $\la$ the representation $\pi_j(\la)$ is a principal series or complementary series, or a Steinberg representation, is it true that $\pi_j$ is not constant?

\begin{bibdiv} \begin{biblist}

\bib{7Samurais}{article}{
   author={Abert, Miklos},
   author={Bergeron, Nicolas},
   author={Biringer, Ian},
   author={Gelander, Tsachik},
   author={Nikolov, Nikolay},
   author={Raimbault, Jean},
   author={Samet, Iddo},
   title={On the growth of $L^2$-invariants for sequences of lattices in Lie
   groups},
   journal={Ann. of Math. (2)},
   volume={185},
   date={2017},
   number={3},
   pages={711--790},
   issn={0003-486X},
   doi={10.4007/annals.2017.185.3.1},
}

\bib{BenHyp}{book}{
   author={Benedetti, Riccardo},
   author={Petronio, Carlo},
   title={Lectures on hyperbolic geometry},
   series={Universitext},
   publisher={Springer-Verlag, Berlin},
   date={1992},
   pages={xiv+330},
   isbn={3-540-55534-X},
   doi={10.1007/978-3-642-58158-8},
}

\bib{BS}{article}{
   author={Benjamini, Itai},
   author={Schramm, Oded},
   title={Recurrence of distributional limits of finite planar graphs},
   journal={Electron. J. Probab.},
   volume={6},
   date={2001},
   pages={no. 23, 13},
   issn={1083-6489},
   doi={10.1214/EJP.v6-96},
}

\bib{thin}{collection}{
   title={Thin groups and superstrong approximation},
   series={Mathematical Sciences Research Institute Publications},
   volume={61},
   editor={Breuillard, Emmanuel},
   editor={Oh, Hee},
   note={Selected expanded papers from the workshop held in Berkeley, CA,
   February 6--10, 2012},
   publisher={Cambridge University Press, Cambridge},
   date={2014},
   pages={xii+362},
   isbn={978-1-107-03685-7},
}

\bib{Bruhat}{article}{
   author={Bruhat, Fran\c{c}ois},
   title={Distributions sur un groupe localement compact et applications \`a
   l'\'etude des repr\'esentations des groupes $\wp $-adiques},
   language={French},
   journal={Bull. Soc. Math. France},
   volume={89},
   date={1961},
   pages={43--75},
   issn={0037-9484},
}

\bib{Chabauty}{article}{
   author={Chabauty, Claude},
   title={Limite d'ensembles et g\'eom\'etrie des nombres},
   language={French},
   journal={Bull. Soc. Math. France},
   volume={78},
   date={1950},
   pages={143--151},
   issn={0037-9484},
}

\bib{HA2}{book}{
   author={Deitmar, Anton},
   author={Echterhoff, Siegfried},
   title={Principles of harmonic analysis},
   series={Universitext},
   edition={2},
   publisher={Springer, Cham},
   date={2014},
   pages={xiv+332},
   isbn={978-3-319-05791-0},
   isbn={978-3-319-05792-7},
   doi={10.1007/978-3-319-05792-7},
}

\bib{TraceGroups}{article}{
   author={Deitmar, Anton},
   author={van Dijk, Gerrit},
   title={Trace class groups},
   journal={J. Lie Theory},
   volume={26},
   date={2016},
   number={1},
   pages={269--291},
   issn={0949-5932},
}

\bib{Dix}{book}{
   author={Dixmier, Jacques},
   title={$C\sp*$-algebras},
   note={Translated from the French by Francis Jellett;
   North-Holland Mathematical Library, Vol. 15},
   publisher={North-Holland Publishing Co., Amsterdam-New York-Oxford},
   date={1977},
   pages={xiii+492},
   isbn={0-7204-0762-1},
}

\bib{Haagerup}{article}{
   author={Haagerup, Uffe},
   author={Przybyszewska, Agata},
   title={Proper metrics on locally compact groups, and proper affine isometric actions on Banach spaces},
   eprint={https://arxiv.org/abs/math/0606794},
   date={2006},
}

\bib{Helg}{book}{
   author={Helgason, Sigurdur},
   title={Differential geometry, Lie groups, and symmetric spaces},
   series={Graduate Studies in Mathematics},
   volume={34},
   note={Corrected reprint of the 1978 original},
   publisher={American Mathematical Society, Providence, RI},
   date={2001},
   pages={xxvi+641},
   isbn={0-8218-2848-7},
   doi={10.1090/gsm/034},
}

\bib{Kato}{book}{
   author={Kato, Tosio},
   title={Perturbation theory for linear operators},
   series={Classics in Mathematics},
   note={Reprint of the 1980 edition},
   publisher={Springer-Verlag, Berlin},
   date={1995},
   pages={xxii+619},
   isbn={3-540-58661-X},
}

\bib{MZ}{book}{
   author={Montgomery, Deane},
   author={Zippin, Leo},
   title={Topological transformation groups},
   note={Reprint of the 1955 original},
   publisher={Robert E. Krieger Publishing Co., Huntington, N.Y.},
   date={1974},
   pages={xi+289},
}

\bib{Sauvageot}{article}{
   author={Sauvageot, Fran\c{c}ois},
   title={Principe de densit\'e pour les groupes r\'eductifs},
   language={French, with English and French summaries},
   journal={Compositio Math.},
   volume={108},
   date={1997},
   number={2},
   pages={151--184},
   issn={0010-437X},
   doi={10.1023/A:1000216412619},
}

\bib{Schaef}{book}{
   author={Schaefer, H. H.},
   author={Wolff, M. P.},
   title={Topological vector spaces},
   series={Graduate Texts in Mathematics},
   volume={3},
   edition={2},
   publisher={Springer-Verlag, New York},
   date={1999},
   pages={xii+346},
   isbn={0-387-98726-6},
   doi={10.1007/978-1-4612-1468-7},
}

\bib{Tao}{book}{
   author={Tao, Terence},
   title={Hilbert's fifth problem and related topics},
   series={Graduate Studies in Mathematics},
   volume={153},
   publisher={American Mathematical Society, Providence, RI},
   date={2014},
   pages={xiv+338},
   isbn={978-1-4704-1564-8},
}

\end{biblist} \end{bibdiv}

{\small Mathematisches Institut,
Auf der Morgenstelle 10,
72076 T\"ubingen,
Germany\\
\tt deitmar@uni-tuebingen.de}

\end{document}